\DeclareRobustCommand{\intprod}{%
  \mathbin{\mathpalette\int@prod{(0.1,0)(0.9,0)(0.9,0.8)}}%
}
\DeclareRobustCommand{\intprodr}{%
  \mathbin{\mathpalette\int@prod{(0.1,0.8)(0.1,0)(0.9,0)}}}
\newcommand{\int@prod}[2]{%
  \begingroup
  \sbox\z@{$\m@th#1+$}%
  \setlength\unitlength{\wd\z@}%
  \begin{picture}(1,1)
  \roundcap
  \polyline#2
  \end{picture}%
  \endgroup
}
\newcommand{\dbar}{\ensuremath{\overline\partial}}
\newcommand{\sumprime}{\if@display\sideset{}{'}\sum%
            \else\sum'\fi}
\begin{document}

\numberwithin{equation}{section}

% define theorem environments
\newtheorem{theorem}{Theorem}[section]
\newtheorem{proposition}[theorem]{Proposition}
\newtheorem{conjecture}[theorem]{Conjecture}
\def\theconjecture{\unskip}
\newtheorem{corollary}[theorem]{Corollary}
\newtheorem{lemma}[theorem]{Lemma}
\newtheorem{observation}[theorem]{Observation}
\newtheorem{definition}{Definition}
\numberwithin{definition}{section} %\def\thedefinition{\unskip}
\newtheorem{remark}{Remark}
\def\theremark{\unskip}
\newtheorem{kl}{Key Lemma}
\def\thekl{\unskip}
\newtheorem{question}{Question}
\def\thequestion{\unskip}
\newtheorem{example}{Example}
\def\theexample{\unskip}
\newtheorem{problem}{Problem}

\thanks{The first author is supported by National Natural Science Foundation of China, No. 12271101}
\thanks{The second author is supported by Fundamental Research Fund for the Central Universities,  Southwest Minzu University, No. ZYN2023075}

\title[The $H^2-$Corona problem on delta-regular domains]{The $H^2-$Corona problem on delta-regular domains}

 \author[Bo-Yong Chen]{Bo-Yong Chen}
 \author[Xu Xing]{Xu Xing}
 \begin{abstract}
 We prove an $H^2-$Corona theorem   with estimate  $C(\delta)=C\delta^{-1-q}|\log \delta|$ for $\delta\ll 1$ on  delta-regular domains,  where $q=\min\{n,m-1\}$ and $m$ is the number of generators.  This class of domains  includes smooth bounded domains with defining functions that are plurisubharmonic on  boundaries and pseudoconvex domains of D'Angelo finite type.
  \bigskip
  
  \noindent{{\sc Mathematics Subject Classification} (2020): 32A10, 32A35, 32T25.}
  
  \smallskip
  
  \noindent{{\sc Keywords}: Hardy space, $H^2-$Corona problem, $\bar{\partial}-$operator,  delta-regular domain.} 
\end{abstract}

\address [Bo-Yong Chen] {School of Mathematical Sciences,  Fudan University,  Shanghai,  200433,  China}
\email{boychen@fudan.edu.cn}
 
\address[Xu Xing] {School of Mathematics,  Southwest Minzu University,  Chengdu,   610041,  China} 
\email{xingxu@swun.edu.cn}
\maketitle

\section{Introduction}

Let $\Omega\subset \mathbb C^n$ be a bounded pseudoconvex domain with $C^2-$boundary  and $\mathcal O(\Omega)$ the set of holomorphic functions on $\Omega$.  Let $H^\infty(\Omega)\subset \mathcal O(\Omega)$ be  the space of bounded holomorphic functions.  
Given a function space $\mathscr F\subset \mathcal O(\Omega)$,  one has the following  classical problem in complex analysis:

\begin{problem}[$\mathscr F-$Corona problem]
Given $g=(g_1,\cdots,g_m)\in H^\infty(\Omega)^{\oplus{m}}$ with 
$$
 1\geq|g|:=\sqrt{|g_1|^2+\cdots+|g_m|^2} \ge \delta>0. 
$$
 Is it possible to conclude that for each $f\in \mathscr F$ there exists $h=(h_1,\cdots,h_m)\in \mathscr F^{\oplus{m}}$ such that
 $$
\begin{cases}
g\cdot{h}:= g_1 h_1+\cdots+g_m h_m =  f & \\
\|h\|_{\mathscr F}\leq  C(\delta)\|f\|_{\mathscr F} &
\end{cases}
$$
where  $C(\delta)$ is a constant depending on $\delta$?
\end{problem}

The original Corona problem, i.e.,  the $H^\infty-$Corona problem for the unit disc  $\mathbb D$,  has been solved by Carleson \cite{Carleson} in the positive sense long time ago.  Motivated by the work of H\"ormander \cite{Hormander67},   Wolff discovered a remarkable  $\bar{\partial}-$proof of Carleson's theorem with $C(\delta)=C\delta^{-4}$ (cf. \cite{Garnnett}).  
For $\delta\ll1$,  Tolokonnikiv \cite{Tolokonnikov} got $C(\delta)=C\delta^{-2}|\log\delta|^{3/2}$ and showed that the estimate cannot be better than $C\delta^{-2}$. Actually, the estimate cannot be better than $C\delta^{-2}\log|\log \delta|$ (see Treil \cite{Treil}).  Up to now the best known result is  Uchiyama's estimate $C(\delta)=C \delta^{-2}|\log \delta|$ (cf.  \cite{Uchiyama}).  The $H^\infty-$Corona theorem (without estimate on $C(\delta)$) remains true on finitely-connected planar domains (cf.  \cite{Alling},  \cite{Stout}).  

In the context of several complex variables,   Sibony \cite{Sibony72} first constructed a bounded (non-smooth) pseudoconvex domain in $ \mathbb C^2$ on which the $H^\infty-$Corona theorem fails.  Later, the construction was modifies to produce a counterexample of smooth pseudoconvex domain in $\mathbb C^3$ (cf. Sibony \cite{Sibony87}), and in $\mathbb{C}^2$ (cf. Forn{\ae}ss-Sibony \cite{Fornaess-Sibony}).   On the other hand,  it is surprising that one can not find a domain in $\mathbb C^n$ with $n\ge 2$ on which the   $H^\infty-$Corona theorem holds,  even for simple domains like the unit ball $\mathbb B^n$.  In 1977,  Varopoulos \cite{Varopoulos} proved the ${\rm BMOA}-$Corona theorem for two generators on strongly pseudoconvex domains.  Amar \cite{Amar} proved the $H^p-$Corona theorem for  two generators on $\mathbb B^n$.   Later,  Andersson-Carlsson \cite{AC} obtained the $H^p$ and $\mathrm{BMOA}$ Corona theorems with precise estimates on strongly pseudoconvex domains.   Since the literature on the Corona problem  increases rapidly,   it is difficult to mention all of them.  For a survey up to 2014,  see  \cite{Douglas}.

The first general result on weakly pseudoconvex domains  was obtained by Andersson:  

\begin{theorem}[cf.  \cite{Andersson}, \cite{Andersson2}]
If\/ $\Omega\subset \mathbb C^n$ admits  a plurisubharmonic (psh) defining function on $\Omega$, then the $H^2-$corona problem is solvable in the positive sense with estimate $C(\delta)=C\delta^{-1-q}|\log\delta|$ \/ for $\delta\ll 1$,  where $q=\min\{n,m-1\}$ and $C$ is independent of $m$.
\end{theorem}

The goal of this paper is to generalize Andersson's theorem to a much larger class of weakly pseudoconvex domains:  

\begin{definition}[cf. \cite{ChenFu}]
A bounded domain $\Omega\subset\mathbb C^n$ is said to be delta-regular if there exist a bounded $C^2$ psh function $\lambda$ and a $C^2$ defining function $\rho$ on $\Omega$ such that $i\partial\bar{\partial}\lambda\ge \rho^{-1} i\partial\bar{\partial}\rho$.  
\end{definition}

To avoid confusion,  we use "delta-regular" instead of "$\delta-$regular" in the original paper \cite{ChenFu} since the symbol $\delta$ has been used frequently in the present paper (e.g.,  $C(\delta)$).   
It is known from \cite{ChenFu} that the class of delta-regular domains includes bounded domains with defining functions that are psh on $\partial \Omega$ and pseudoconvex domains satisfying the following condition:   
\begin{equation}\label{eq:Catlin}
\exists\,\lambda\in PSH(\Omega)\cap C^2(\Omega), \eta>0,\ \mathrm{s.t.}\ 0\leq\lambda\leq1,\ i\partial\bar{\partial} \lambda \gtrsim d^{-\eta}\, i\partial\bar{\partial} |z|^2,\ \ \ \forall\,z\in\Omega,
\end{equation}
where $d$ is the Euclidean boundary distance and $PSH(\Omega)$ is the set of plurisubharmonic (psh) functions on $\Omega$. Catlin \cite{Catlin87} obtained subelliptic estimates for the $\bar\partial-$Neumann problem under \eqref{eq:Catlin}. He also showed that \eqref{eq:Catlin} holds on pseudoconvex domains of  finite type in the sense of D'Angelo \cite{DAngelo}. In \S\,\ref{subsec:delta_regular_log}, we obtain delta-regularity under the following much weaker condition:
 \begin{equation}\label{eq:logGrowth_0}
\exists\,\lambda\in PSH(\Omega)\cap C^2(\Omega), \alpha>1,\ \mathrm{s.t.}\ 0\leq\lambda\leq1,\ i\partial\bar{\partial} \lambda \gtrsim |\log d|^{\alpha}\, i\partial\bar{\partial} |z|^2,\ \ \ \forall\,z\in\Omega.
 \end{equation}
Note that \eqref{eq:logGrowth_0} is also used in Kohn's theory of superlogarithm estimates for the $\bar\partial-$Neumann problem (cf. \cite{Kohn}; see also \cite{KZ}). It remains an open problem whether $\Omega$ is delta-regular if $\partial\Omega$ verifies Catlin's property (P), i.e.,
\[
\forall\,M>0,\ \exists\,\lambda\in PSH(\Omega)\cap C^\infty(\overline{\Omega}),\ \mathrm{s.t.}\ 0\leq\lambda\leq1,\ i\partial\bar{\partial} \lambda \ge M i\partial\bar{\partial} |z|^2,\ \ \ \forall\,z\in \partial \Omega.
\]
Catlin \cite{Catlin84} obtained global regularity of the $\bar\partial-$Neumann problem under property (P).

Our main result is given as follows. 

\begin{theorem}\label{th:Main}
If\/ $\Omega\subset \mathbb C^n$ is a delta-regular domain, then the $H^2-$corona problem is solvable in the positive sense with estimate $C(\delta)=C\delta^{-1-q}|\log\delta|$ \/ for $\delta\ll 1$,  where $q=\min\{n,m-1\}$ and $C$ is independent of $m$. 
\end{theorem}

In our proof of Theorem \ref{th:Main}, we  apply the way of solving the $\bar{\partial}-$equation in H\"ormander's 1965 Acta paper \cite{Hormander}  instead of solving the $\bar{\partial}_b-$equation in \cite{Andersson}, where Kohn's regularity theory on the $\bar{\partial}-$Neumann problem has to be used.  It seems that  the three-weights technique in H\"ormander's book \cite{HormanderBook} does not work (see \S\,5 for a brief discussion).  This is exactly the primary motivation of writing this paper.  In order to make this comparison more transparent,  we shall treat the special case of two generators (without estimate on $C(\delta)$) separately,  since  it suffices to solve a {\it single} $\bar{\partial}-$equation (which is more simple) and the main\/ {\it analytic}\/ idea is essentially contained in this case.   The general case  is more complicated\/ {\it algebraically}: we have to solve a vector-valued $\bar{\partial}-$equation with certain\/ {\it additional} condition and some techniques due to Skoda \cite{Skoda} (and simplified by Ohsawa \cite{Ohsawa}) are necessary.  In order to get the estimate on $C(\delta)$,    more delicate analysis is needed.   Finally,  we remark that  the crucial inequality that we have used,  goes back to Berndtsson \cite{Berndtsson},  which is also contained in \cite{Andersson}. 

A natural question arises

\begin{problem}
Does the $H^2-$Corona problem have a solution on any bounded pseudoconvex domain with $C^2-$boundary?   
\end{problem}

It is shown in \cite{Chen14} that the $A^2_\alpha(\Omega)-$Corona problem (for two generators) is solvable in the positive sense on any bounded pseudoconvex domain with $C^2-$boundary for each $\alpha>-1$,  where 
$$
A^2_\alpha(\Omega) = \left\{f\in \mathcal O(\Omega):  \int_\Omega |f|^2 d^\alpha <\infty\right\}.  
$$
Note that the Hardy space $H^2(\Omega)$ may be viewed as the limit of the (weighted) Bergman space $A^2_\alpha(\Omega)$ as $\alpha\rightarrow -1+$. 

\section{Preliminaries}
\subsection{Carleson-type inequalities}
Let  $\Omega$ be a bounded domain with $C^2-$boundary in $\mathbb C^n$ and  $\rho$ a $C^2$ defining function. We have 

\begin{lemma}\label{lm:sh}
 There exists a $C^2$ subharmonic defining function ${\varrho}$ such that ${\varrho} =\rho+A\rho^2$ in a neighborhood of $\partial \Omega$\/ for some $A>0$.  
\end{lemma}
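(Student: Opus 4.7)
The plan is to verify directly that $\varrho_A := \rho + A\rho^2$ is subharmonic in a one-sided collar of $\partial\Omega$ for $A$ large enough, and then extend it to a global $C^2$ subharmonic defining function on $\bar\Omega$ by taking a regularized maximum with a small negative constant.

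First I would compute, by the product rule,
\[
i\partial\bar\partial \varrho_A = (1+2A\rho)\, i\partial\bar\partial\rho + 2A\, i\partial\rho\wedge\bar\partial\rho,
\]
and take the trace with respect to the Euclidean metric to obtain
\[
\Delta \varrho_A = (1+2A\rho)\,\Delta\rho + 8A|\partial\rho|^2.
\]
Since $\rho$ is a $C^2$ defining function, $|\partial\rho|\geq c_0 > 0$ on some one-sided collar $U = \{-\eta < \rho \leq 0\}$ while $\Delta\rho$ stays bounded. Shrinking $\eta$ so that $|1+2A\rho|\leq 2$ on $U$ and then choosing $A$ so large that $8Ac_0^2 > 2\sup_U|\Delta\rho|$ forces $\Delta\varrho_A > 0$ on $U$. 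Simultaneously, $\varrho_A = 0$ on $\partial\Omega$ and $d\varrho_A = (1+2A\rho)\,d\rho$ agrees with $d\rho$ on $\partial\Omega$, so $\varrho_A$ is itself a $C^2$ defining function near $\partial\Omega$.

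To globalize, I would fix the constants above and choose $c$ with $-\eta/2 + A\eta^2/4 < c < 0$, so that $\varrho_A < c$ on $\{\rho = -\eta/2\}$ and $\varrho_A > c$ in a neighborhood of $\partial\Omega$. Let $M_\delta$ denote a standard regularized max (smooth, convex, non-decreasing in each argument, and agreeing with the ordinary max when its arguments differ by at least $\delta$). For $\delta$ small, the transition set $\{|\varrho_A - c| < \delta\}$ lies strictly inside $U$, so $\varrho_A$ is subharmonic throughout the region where $M_\delta$ actually interpolates. Since the constant $c$ is subharmonic and $M_\delta$ preserves subharmonicity of its arguments, the function $\varrho := M_\delta(\varrho_A, c)$ is $C^2$ and subharmonic on $\Omega$. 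By construction, $\varrho = \varrho_A = \rho + A\rho^2$ on a neighborhood of $\partial\Omega$ (where $\varrho_A > c+\delta$), $\varrho = c < 0$ deep inside $\Omega$, and $\varrho < 0$ throughout $\Omega$, yielding the required defining function.

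The analytic input, Step 1, is elementary — it is just the familiar fact that $|\partial\rho|^2$ dominates near the boundary. The main obstacle is the bookkeeping in Step 2: one has to pick $A$, $\eta$, $c$, $\delta$ compatibly so that (i) $\varrho_A$ is subharmonic on the full regularization region, (ii) the formula $\varrho = \rho + A\rho^2$ holds on a genuine open neighborhood of $\partial\Omega$, and (iii) $\varrho < 0$ throughout $\Omega$. These are readily met by choosing $\eta$ small relative to $1/A$ and then $|c|, \delta$ small relative to the value of $\varrho_A$ at $\rho = -\eta/4$, but writing down a clean chain of inequalities is the only non-trivial aspect.
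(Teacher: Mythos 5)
Your collar computation is correct and coincides with the paper's first step: $\Delta(\rho+A\rho^2)=(1+2A\rho)\Delta\rho+2A|\nabla\rho|^2=(1+2A\rho)\Delta\rho+8A|\partial\rho|^2$, and since $|\partial\rho|$ is bounded below near $\partial\Omega$ while $\Delta\rho$ stays bounded, $\varrho_A$ is strictly subharmonic on a one-sided collar once $A$ is large. (The slight circularity in your ordering of $\eta$ and $A$ is harmless, because $\sup_U|\Delta\rho|$ can be replaced by $\sup_{\overline\Omega}|\Delta\rho|$ before choosing $A$.)

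The globalization step, however, fails as written. The function $\varrho_A=\rho(1+A\rho)$ is not $\le c-\delta$ on the deep part of $\Omega$: it attains its minimum value $-1/(4A)$ on $\{\rho=-1/(2A)\}$ and is \emph{positive} wherever $\rho<-1/A$. Since $A$ must be large, $\Omega$ in general contains points with $\rho<-1/A$, so $\varrho_A>0>c$ on a nonempty open set deep inside $\Omega$. Hence the transition set $\{|\varrho_A-c|<\delta\}$ has components far outside $U$ (where $\varrho_A$ need not be subharmonic), and on the deep region where $\varrho_A>c+\delta$ one has $M_\delta(\varrho_A,c)=\varrho_A$; your $\varrho$ is then neither negative on $\Omega$ nor subharmonic, so the claims ``$\varrho=c$ deep inside'' and ``$\varrho<0$ throughout $\Omega$'' are false for the function you built. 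The repair is easy and keeps your structure: glue only across the collar. For instance, choose $\delta$ so small that $\varrho_A<c-\delta$ on a neighborhood of $\{\rho=-\eta/2\}$, and set $\varrho:=M_\delta(\varrho_A,c)$ on $\{\rho>-\eta/2\}$ and $\varrho:=c$ on $\{\rho\le-\eta/2\}$; the two formulas agree near the interface, so $\varrho$ is $C^2$, subharmonic, negative on $\Omega$, and equals $\rho+A\rho^2$ near $\partial\Omega$. This is in spirit what the paper does: it flattens $\tilde\rho=\rho+A\rho^2$ by composing with a convex increasing $\kappa$ that is constant on $(-\infty,-r]$, with $r$ chosen so that, inside the collar, the non-flat region $\{\tilde\rho\ge-r\}$ is compactly contained there, plus a small correction $a\chi|z|^2$. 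Note that the same point --- the region $\{\rho<-1/A\}$, where $\rho+A\rho^2$ climbs back above the flattening level --- must be handled there as well, by making the final function identically constant on the deep region; so whichever gluing device you use, the cut has to be made in terms of $\rho$ (i.e., of the collar), not merely by comparing the values of $\rho+A\rho^2$ with a constant.
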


\begin{proof}
Set $\tilde{\rho}:=\rho+A\rho^2$. Since 
$$
\Delta \tilde{\rho} = \Delta \rho + 2 A\rho\Delta \rho +2A |\nabla \rho|^2, 
$$
we conclude that $\tilde{\rho}$ is strictly subharmonic  in a neighborhood $U$ of $\partial \Omega$ whenever $A$ is sufficiently large.  We may choose $r>0$ such that  $\{z\in \overline{\Omega}\cap U: \tilde{\rho}(z)\ge -r\}$ is relatively compact in $\overline{\Omega}\cap U$. We also choose a smooth convex increasing function $\kappa$ with $\kappa=-3r/4$ on $(-\infty,-r]$ and $\kappa(t)=t$ on $[-r/2,\infty)$, and $\chi\in C^\infty_0(\Omega)$ with $\chi=1$ on $\{\kappa(\tilde{\rho})\le -r/2\}$. It suffices to take
$$
\varrho=\kappa(\tilde{\rho})+c\chi\cdot |z|^2
$$
for suitable small constant $c>0$.
\end{proof}

Set $\Omega_\varepsilon=\{z\in \Omega:\rho_\varepsilon(z):=\rho(z)+\varepsilon<0\}$ for $\varepsilon>0$. We have the following Carleson-type inequalities: 

\begin{lemma}[compare \cite{AnderssonBook}]\label{lm:Green}
Let $\psi$ be a bounded $C^2$ subharmonic function on $\Omega$ and $f\in \mathcal O(\Omega)$. Then there exists a positive constant $C$ depending only on $\Omega$  such that for all $\varepsilon\ll1$,
\begin{equation}\label{eq:G_1}
\int_{\Omega_\varepsilon} (-\rho_\varepsilon) |f|^2 \Delta \psi \le C\|\psi\|_{L^\infty(\Omega)}\int_{\partial \Omega_\varepsilon} |f|^2  d\sigma_\varepsilon,
\end{equation}
\begin{equation}\label{eq:G_2}
\int_{\Omega_\varepsilon} (-\rho_\varepsilon) |f|^2 |\nabla \psi|^2  \le C\|\psi\|_{L^\infty(\Omega)}^2\int_{\partial \Omega_\varepsilon} |f|^2  d\sigma_\varepsilon,
\end{equation}
\begin{equation}\label{eq:G_3}
\int_{\Omega_\varepsilon} (-\rho_\varepsilon) |\partial f|^2   \le C\int_{\partial \Omega_\varepsilon} |f|^2  d\sigma_\varepsilon.
\end{equation}
Here $d\sigma_\varepsilon$ denotes the surface element on $\partial \Omega_\varepsilon$.
\end{lemma}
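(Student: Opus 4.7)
The plan is to deduce (3) directly from Green's formula, and then to prove (1) and (2) together via a bootstrap based on the product-rule identity
\[
|f|^2\Delta\varphi \;=\; \Delta(|f|^2\varphi) \;-\; \varphi\Delta|f|^2 \;-\; 2\,\nabla|f|^2\cdot\nabla\varphi.
\]
For (3), the holomorphy of $f$ gives $\Delta|f|^2 = 4|\partial f|^2$, and Green's identity applied to $u=-\rho_\varepsilon$ (which vanishes on $\partial\Omega_\varepsilon$) and $v=|f|^2$ yields
\[
4\int_{\Omega_\varepsilon}(-\rho_\varepsilon)|\partial f|^2 \;=\; \int_{\partial\Omega_\varepsilon}|f|^2\,\partial_n\rho\,d\sigma_\varepsilon \;-\; \int_{\Omega_\varepsilon}|f|^2\,\Delta\rho.
\]
Since $\partial_n\rho$ and $|\Delta\rho|$ are bounded on $\overline{\Omega}$, this reduces (3) to the area-to-boundary bound $\int_{\Omega_\varepsilon}|f|^2 \le C\int_{\partial\Omega_\varepsilon}|f|^2 d\sigma_\varepsilon$, which holds uniformly in small $\varepsilon$ by another Green identity using the strictly subharmonic $\varrho$ from Lemma \ref{lm:sh} (exploiting $\varrho\le 0$ and $\Delta|f|^2\ge 0$).

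For (1) and (2), I would first normalize $\varphi\ge 0$ by replacing it with $\varphi + \|\varphi\|_{L^\infty}$, which preserves $\Delta\varphi$ and $\nabla\varphi$ and at most doubles the sup-norm. Write $M=\|\varphi\|_{L^\infty(\Omega)}$, $S=\int_{\partial\Omega_\varepsilon}|f|^2 d\sigma_\varepsilon$, and denote the left-hand sides of (1) and (2) by $D$ and $A$, and of (3) by $B$. Integrating the displayed product-rule identity against $(-\rho_\varepsilon)$ and applying Green to the first term gives $D=T_1-T_2-2T_3$, where the diagonal terms
\[
T_1=\int_{\Omega_\varepsilon}(-\rho_\varepsilon)\Delta(|f|^2\varphi),\qquad T_2=\int_{\Omega_\varepsilon}(-\rho_\varepsilon)\varphi\,\Delta|f|^2
\]
are both bounded by $CMS$ (using $-\rho_\varepsilon=0$ on $\partial\Omega_\varepsilon$, boundedness of $\Delta\rho$ and $\partial_n\rho$, the area estimate, and (3)), while the cross term is controlled via $|\nabla|f|^2|\le 2|f||\partial f|$ and Cauchy--Schwarz by $|T_3|\le 2\sqrt{AB}$. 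Combined with (3), this yields
\[
D \;\le\; CMS \;+\; C\sqrt{AS}.
\]

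To close the loop I would apply exactly the same estimate with $\varphi$ replaced by $\varphi^2$, which is still subharmonic (since $\varphi\ge 0$ is subharmonic), has sup-norm at most $M^2$, and satisfies $|\nabla\varphi^2|^2 = 4\varphi^2|\nabla\varphi|^2 \le 4M^2|\nabla\varphi|^2$. The analogue of the cross-term bound with $A'\le 4M^2A$ gives
\[
D':=\int_{\Omega_\varepsilon}(-\rho_\varepsilon)|f|^2\Delta\varphi^2 \;\le\; CM^2S \;+\; CM\sqrt{AS}.
\]
On the other hand $\Delta\varphi^2 = 2\varphi\Delta\varphi + 2|\nabla\varphi|^2 \ge 2|\nabla\varphi|^2$, so $D'\ge 2A$; combining, $A \le CM^2S + CM\sqrt{AS}$, and viewing this as a quadratic inequality in $\sqrt{A/S}$ forces $A \le C'M^2S$, which is (2). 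Plugging back into the bound for $D$ yields $D \le CMS + C\sqrt{C'M^2S\cdot S} \le C''MS$, which is (1).

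The main obstacle is the cross term $T_3$: naive integration by parts only reproduces similar expressions (essentially trading $\nabla h\cdot\nabla\varphi$ terms back and forth with $\nabla\rho$), and the clean Cauchy--Schwarz bound forcibly couples (1) to (2). This coupling is precisely what the bootstrap through $\varphi^2$ resolves, at the price of running the same estimate twice. A secondary technical point is verifying that all boundary terms and integration by parts go through uniformly in $\varepsilon$: for small $\varepsilon$, $\partial\Omega_\varepsilon$ is a $C^2$ level set of $\rho$ on which $\partial_n\rho$ is bounded above and below by positive constants, so all implicit constants can be taken independent of $\varepsilon$.
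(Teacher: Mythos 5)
Your argument is correct, but it takes a genuinely different route from the paper. The paper never touches $|f|^2\Delta\varphi$ directly: it applies Green's formula to $e^{\varphi}$ against the subharmonic weight $-\varrho_\varepsilon$ (with $\varrho$ from Lemma \ref{lm:sh}), discards the volume term $-\int_{\Omega_\varepsilon}\Delta\varrho_\varepsilon e^{\varphi}$ by sign, and then obtains \eqref{eq:G_1}--\eqref{eq:G_3} in one stroke by the Wolff--Berndtsson substitution $\varphi\mapsto \varphi/\|\varphi\|_{L^\infty}+\log(|f|^2+\tau)$, the point being that $\Delta e^{\varphi}=(\Delta\varphi+|\nabla\varphi|^2)e^{\varphi}$ produces both the Laplacian and the gradient term simultaneously, with $e^{\varphi/\|\varphi\|_\infty}\asymp 1$ giving the correct dependence on $\|\varphi\|_{L^\infty}$, and no area-to-boundary estimate is ever needed. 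You instead integrate the product-rule identity for $|f|^2\Delta\varphi$ against $-\rho_\varepsilon$, control the cross term by Cauchy--Schwarz, and decouple \eqref{eq:G_1} from \eqref{eq:G_2} by rerunning the estimate for $\varphi^2$ and solving a quadratic inequality; your $\varphi\mapsto\varphi^2$ bootstrap plays the structural role that the exponential plays in the paper. Your version is more elementary (no exponential weight, no $\tau$-limit) at the cost of an extra input and a second pass; the paper's version is shorter and self-contained given Lemma \ref{lm:sh}.

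The one step you should justify more carefully is the area-to-boundary bound $\int_{\Omega_\varepsilon}|f|^2\le C\int_{\partial\Omega_\varepsilon}|f|^2\,d\sigma_\varepsilon$: your Green-identity proof of it requires $\Delta\varrho\ge c>0$ on $\Omega_\varepsilon$, i.e.\ \emph{strict} subharmonicity, whereas Lemma \ref{lm:sh} as stated only asserts subharmonicity (the construction there can be arranged to give $\Delta\varrho\ge c>0$, via large $A$ near $\partial\Omega$ and the $a\chi|z|^2$ term inside, but this needs to be said). Alternatively you could bypass the area bound altogether by running your Green identities with the weight $-\varrho_\varepsilon$ in place of $-\rho_\varepsilon$ (they are comparable, as in the paper) and discarding the resulting volume term $\int|f|^2\varphi\,\Delta\varrho_\varepsilon$-type contributions by sign where possible; also note your quadratic-inequality step implicitly assumes $S>0$ and $A<\infty$, both of which hold trivially (maximum principle when $S=0$; $\varphi\in C^1(\overline{\Omega})$ for finiteness) but deserve a word.
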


\begin{proof}
Let $\varrho$ be chosen as Lemma \ref{lm:sh}. Set $\varrho_\varepsilon=\varrho+\varepsilon-A\varepsilon^2$. Since 
$$
-\varrho_\varepsilon=(-\rho_\varepsilon)\left[1+A(\rho-\varepsilon)\right]
$$
holds near $\partial\Omega$, it follows that $-\varrho_\varepsilon\asymp -\rho_\varepsilon$ for $\varepsilon\ll1$ with implicit constants depending only on $\Omega$.
 Thus it suffices to verify $\eqref{eq:G_1}\sim \eqref{eq:G_3}$ with $\rho_\varepsilon$ replaced $\varrho_\varepsilon$. By Green's formula, we have
$$
 \int_{\Omega_\varepsilon} (-\varrho_\varepsilon)  (\Delta \psi+|\nabla \psi|^2) e^\psi
  =  \int_{\Omega_\varepsilon} (-\varrho_\varepsilon)\Delta e^\psi
 =  -\int_{\Omega_\varepsilon} \Delta \varrho_\varepsilon\,e^\psi + \int_{\partial \Omega_\varepsilon} \frac{\partial\varrho_\varepsilon}{\partial \nu_\varepsilon}\,e^\psi d\sigma_\varepsilon
  \le  C \int_{\partial \Omega_\varepsilon} e^\psi d\sigma_\varepsilon
$$
 because $\Delta \varrho_\varepsilon\ge 0$, so that 
 \begin{equation}\label{eq:G_5}
 \int_{\Omega_\varepsilon} (-\varrho_\varepsilon)  \Delta \psi\, e^\psi 
 \le C \int_{\partial \Omega_\varepsilon} e^\psi d\sigma_\varepsilon,
 \end{equation}
  \begin{equation}\label{eq:G_6}
 \int_{\Omega_\varepsilon} (-\varrho_\varepsilon)  |\nabla \psi|^2 e^\psi 
 \le C \int_{\partial \Omega_\varepsilon} e^\psi d\sigma_\varepsilon.
 \end{equation}   
Apply \eqref{eq:G_6} with $\psi=\log (|f|^2+\tau)$ and then let $\tau\downarrow 0$, we obtain \eqref{eq:G_3}. Analogously, apply \eqref{eq:G_5} with $\psi$ replaced by $\psi/\|\psi\|_{L^\infty(\Omega)}+\log (|f|^2+\tau)$ and then let $\tau\downarrow 0$, we obtain \eqref{eq:G_1}. Finally, apply \eqref{eq:G_6} with $\psi$ replaced by $\psi/\|\psi\|_{L^\infty(\Omega)}+\log (|f|^2+\tau)$ and then let $\tau\downarrow 0$, we obtain 
 $$
  \int_{\Omega_\varepsilon} (-\varrho_\varepsilon)  \left|\nabla \psi/\|\psi\|_{L^\infty(\Omega)}+\nabla\log |f|^2\right|^2\, |f|^2 
 \le C \int_{\partial \Omega_\varepsilon} |f|^2 d\sigma_\varepsilon.
 $$
 This combined with \eqref{eq:G_3} gives \eqref{eq:G_2}.
  \end{proof}
 
 \subsection{Hardy space}  
  Let $\Omega$ be a bounded domain with $C^2-$boundary in $\mathbb C^n$.  Given a $C^2$ defining function $\rho$,  set $\Omega_\varepsilon:=\{z\in \Omega: \rho(z)<-\varepsilon\}$.  Following Stein \cite{Stein},  we define the harmonic Hardy space $h^p(\Omega)$,  where $1<p<\infty$,  to be the space of harmonic functions $f$ satisfying
  $$
  \|f\|_{h^p(\Omega)}^p:=\limsup_{\varepsilon\rightarrow 0+} \int_{\partial \Omega_\varepsilon} |f|^p d\sigma_\varepsilon <\infty.
  $$
This definition is independent of the choice of defining functions and the non-tangential limit $f^\ast$ of $f$ exists almost everywhere on $\partial \Omega$.  Furthermore,  $f^\ast \in L^p(\partial \Omega)$ and $\|f\|_{h^p(\Omega)}=\|f^\ast\|_{L^p(\partial \Omega)}$.  It is also known that if $f\in h^p(\Omega)$ then
  $$
  \|f\|_{h^p(\Omega)}^p = \lim_{r\rightarrow 1-} (1-r) \int_\Omega |f|^p d^{-r}
  $$
  where $d=d(\cdot,\partial \Omega)$  (cf.  \cite{ChenFu},  Lemma 3.2).  
  
  The (holomorphic) Hardy space $H^p(\Omega)$ is defined to be $h^p(\Omega)\cap \mathcal O(\Omega)$.  The Hilbert space $H^2(\Omega)$ is of particular interest,  since it admits a reproducing kernel,  i.e.,  the Szeg\"o kernel $S_\Omega(z,w)$,  which is of fundamental importance.  
  
  \subsection{A sufficient condition for delta-regularity}
\label{subsec:delta_regular_log}

 \begin{proposition}\label{prop:delta-regular}
 Let $\Omega\subset \mathbb C^n$ be a bounded pseudoconvex domain with $C^2-$boundary. If \eqref{eq:logGrowth_0} holds, then $\Omega$ is delta-regular.   
 \end{proposition}
 
 \begin{proof}
  First of all,  it is known from \cite{DF1} that there exists a number $\eta>0$ and a $C^2$ defining function $ \rho\ge -1/2$ such that 
 \begin{equation}\label{eq:DF}
 i\partial \bar{\partial} \left[ -(-\rho)^\eta \right] \gtrsim \eta |\rho|^\eta \left(  i\partial \bar{\partial} |z|^2+|\rho|^{-2} i\partial \rho\wedge \bar{\partial} \rho \right).  
 \end{equation}
 Set $\psi:=-(-\rho)^\eta $ and $\phi:=\kappa(-\log(-\psi))$,  where $\kappa(t)=-t^{1-\alpha}$,  $t\ge 1$.  Then we have
 \begin{eqnarray*}
  i\partial \bar{\partial} \phi & = & \kappa'(-\log(-\psi))   i\partial \bar{\partial}(-\log(-\psi)) + \kappa''(-\log(-\psi)) \frac{i\partial \psi\wedge \bar{\partial}\psi}{\psi^2}\\
  & = & \kappa'(-\log(-\psi))   \frac{i\partial \bar{\partial} \psi}{-\psi}+ \left[ \kappa'(-\log(-\psi)) +\kappa''(-\log(-\psi)) \right] \frac{i\partial \psi\wedge \bar{\partial}\psi}{\psi^2}\\
  & \gtrsim & \frac{ i\partial \bar{\partial} |z|^2} {|\log (-\rho)|^\alpha} + \frac{i\partial \rho\wedge \bar{\partial} \rho}{\rho^2 |\log (-\rho)|^\alpha}
 \end{eqnarray*}
 near $\partial \Omega$ in view of \eqref{eq:DF}.

 Now define
$$
N=|\partial \rho|^{-1} \sum_j \frac{\partial \rho}{\partial \bar{z}_j}\frac{\partial}{\partial z_j}
$$
near $\partial \Omega$.  For any complex tangent vector $X$ we may write $X=X_N+X_T$ where $X_N=\langle X,N\rangle N$.   Since $\Omega$ is pseudoconvex,  it follows that 
 \begin{eqnarray*}
 \rho^{-1}  i\partial \bar{\partial}\rho(z;X) & \lesssim &  |X|^2 + |\rho|^{-1} |X|\,|X_N|\\
 & \lesssim & |\log(-\rho)|^\alpha |X|^2 + \frac{|X_N|^2}{\rho^2 |\log(-\rho)|^\alpha}
 \end{eqnarray*}
 holds near $\partial \Omega$.  
 Thus if we take $\tilde{\lambda}=C(\lambda+\phi)$ with $C\gg 1$ then 
 $$
  i\partial \bar{\partial} \tilde{\lambda} \ge \rho^{-1}  i\partial \bar{\partial}\rho
 $$
 holds near $\partial \Omega$. The above inequality holds on whole $\Omega$ if $\tilde{\lambda}$ is replaced by $\tilde{\lambda}+M|z|^2$ for $M\gg1$.
 \end{proof}

\section{The case of two generators}

  We shall follow H\"ormander's classical paper \cite{Hormander}.  Let $U$ be a bounded pseudoconvex domain with $C^2-$boundary.  Given a weight function $\varphi\in C^2(\overline{U})$,   define $L^2_{(p,q)}(U,\varphi)$ to be the Hilbert space of $(p,q)-$forms 
$$
\omega={\sum_{|I|=p}}' {\sum_{|J|=q}}' \omega_{I{J}} dz_I\wedge d\bar{z}_J\ \ \ \text{where}\ \ \ \int_U |\omega_{I{J}}|^2 e^{-\varphi}<\infty,\ \forall\,I,J.
$$
 The $\bar{\partial}-$operator 
induces two closed and densely defined  operators
$$
T: L^2(U,\varphi)\rightarrow L^2_{(0,1)}(U,\varphi),\ \ \  S: L^2_{(0,1)}(U,\varphi)\rightarrow L^2_{(0,2)}(U,\varphi).
$$  
Set
$$
D_{\mathcal N}(U)=\left\{\omega={\sum}_j \omega_j d\bar{z}_j \in C^1_{(0,1)}(\overline{U}): {\sum}_j \omega_j {\partial \rho}/{\partial z_j}=0\ \text{on\ }\partial{U}\right\}.
$$
Let $T^\ast_\varphi$ denote the adjoint of $T$ with respect to the inner product $(\cdot,\cdot)_\varphi$.
It is known that  $D_{\mathcal N}(U)$ lies dense in ${\rm Dom}(T^\ast_\varphi)\cap {\rm Dom}(S)$ with respect to the graph norm
$$
\omega\rightarrow \|\omega\|_{\varphi}+\|T^\ast_\varphi \omega\|_{\varphi}+\|S \omega\|_{\varphi}.
$$
Since $U$ is pseudoconvex, we have the fundamental   Morrey-Kohn-H\"ormander inequality  
\begin{eqnarray}\label{eq:MKH}
 \int_U |S \omega|^2 e^{-\varphi}+\int_U |T^\ast_\varphi \omega|^2 e^{-\varphi}
& \ge &  \int_U \sum_{j,k} \frac{\partial^2\varphi}{\partial z_j\partial\bar{z}_k} \omega_j\bar{\omega}_ke^{-\varphi}
+ \int_U \sum_{j,k}\left|\frac{\partial \omega_j}{\partial \bar{z}_k}\right|^2 e^{-\varphi}
\end{eqnarray}
 for all $\omega\in D_{\mathcal N}(U)$.

Now let $\Omega$ be as Theorem \ref{th:Main} and suppose $\varphi\in C^2(\Omega)$.  
For each $\varepsilon>0$,  we set 
$$
\Omega_\varepsilon=\{d>\varepsilon\}.
$$
Clearly,  $\Omega_\varepsilon$ is a $C^2$ pseudoconvex domain for $\varepsilon\ll1$.   
 For  $\phi:=-r\log (-\rho)$ where $0<r<1$,  we have
$$
i\partial\bar{\partial} \phi = \frac{r i\partial\bar{\partial}\rho}{-\rho} + \frac{i\partial \phi\wedge \bar{\partial}\phi} r.
$$
On the other hand,  the Cauchy-Schwarz inequality gives
$$
\left|T^\ast_{\varphi+\phi} \omega\right|^2 \le (1-r)^{-1} \left|T^\ast_\varphi \omega\right|^2 + r^{-1}\left|\sum_j \frac{\partial \phi}{\partial z_j}\,\omega_j \right|^2.
$$
Thus if we  apply (\ref{eq:MKH}) with $\Omega$ and $\varphi$ replaced by $\Omega_\varepsilon$ and $\varphi+\phi$ respectively, then we obtain
\begin{eqnarray}\label{eq:Berndtsson}
&& \int_{\Omega_\varepsilon} |S \omega |^2 (-\rho)^{r} e^{-\varphi} + (1-r)^{-1}
 \int_{\Omega_\varepsilon} |T^\ast_\varphi  \omega|^2 (-\rho)^{r} e^{-\varphi}\\
 & \ge & \int_{\Omega_\varepsilon} \sum_{j,k} \left[\frac{\partial^2\varphi}{\partial z_j\partial\bar{z}_k}+r (-\rho)^{-1}
 \frac{\partial^2\rho}{\partial z_j\partial\bar{z}_k}\right] \omega_j\bar{\omega}_k (-\rho)^{r} e^{-\varphi} \nonumber\\
 && + \int_{\Omega_\varepsilon} \sum_{j,k}\left|\frac{\partial \omega_j}{\partial \bar{z}_k}\right|^2 (-\rho)^{r} e^{-\varphi},\ \ \ \forall\,\omega\in D_{\mathcal N}(\Omega_\varepsilon).\nonumber
  \end{eqnarray}
  This crucial inequality actually goes back to Berndtsson  \cite{Berndtsson}, which turns out to be a special case of the twisted Morrey-Kohn-H\"ormander inequality due to Ohsawa-Takegoshi \cite{OT}. 
    
  Set $g=(g_1,g_2)$, $|g|^2=|g_1|^2+|g_2|^2$ and $|\partial g|^2=|\partial g_1|^2+|\partial g_2|^2$. Clearly, we have
  $$
  |\partial g|^2 = \frac{\Delta |g|^2}4.
  $$
 Set 
  $
  \varphi(z)=\lambda(z)+|z|^2 + \log |g(z)|^2.
  $
  Since  $\lambda$ is psh and satisfies $i\partial\bar{\partial}\lambda\ge \rho^{-1} i\partial\bar{\partial}\rho$, it follows that
  \begin{eqnarray*}
  i\partial\bar{\partial} \varphi + r(-\rho)^{-1} i\partial\bar{\partial} \rho
  & = & i\partial\bar{\partial} |z|^2 +i\partial\bar{\partial} \log |g|^2 + \frac{(-\rho) i\partial\bar{\partial}\lambda + r i\partial\bar{\partial}\rho}{-\rho} \\
  & \ge & i\partial\bar{\partial} |z|^2 +i\partial\bar{\partial} \log |g|^2 + \frac{r(-\rho) i\partial\bar{\partial}\lambda + r i\partial\bar{\partial}\rho}{-\rho} \\
  & \ge & i\partial\bar{\partial} |z|^2 +i\partial\bar{\partial} \log |g|^2.
      \end{eqnarray*}
  This inequality combined with (\ref{eq:Berndtsson}) gives
  \begin{eqnarray}\label{eq:Key}
&& \int_{\Omega_\varepsilon} |S \omega|^2 (-\rho)^{r} e^{-\varphi} + \frac1{1-r}
 \int_{\Omega_\varepsilon} |T^\ast_\varphi \omega|^2 (-\rho)^{r} e^{-\varphi}\\
 & \ge & \int_{\Omega_\varepsilon} \sum_{j,k} \Theta_{j\bar{k}} \omega_j\bar{\omega}_k (-\rho)^{r} e^{-\varphi} 
   + \int_{\Omega_\varepsilon} \sum_{j,k}\left|\frac{\partial \omega_j}{\partial \bar{z}_k}\right|^2 (-\rho)^{r} e^{-\varphi}\nonumber
  \end{eqnarray}   
  where $\Theta_{j\bar{k}}=\delta_{jk}+\partial^2 \log |g|^2/\partial z_j\partial \bar{z}_k$.
  
  Now we  apply the trick of Wolff.  Set
    \begin{equation}\label{eq:h}
    \left\{
    \begin{array}{ll}
     h_1 = f\bar{g}_1/|g|^2-u g_2 &\\
     h_2 =  f\bar{g}_2/|g|^2+u g_1 &
    \end{array}
    \right.
    \end{equation}
    and $h=(h_1,h_2)$. Clearly we have $g\cdot h=f$. In order to make $h$ holomorphic, the function $u$ has to satisfy
    $$
    \left\{
    \begin{array}{ll}
     g_2 \bar{\partial} u = f\bar{\partial}(\bar{g}_1/|g|^2) &\\
     g_1 \bar{\partial} u =- f\bar{\partial}(\bar{g}_2/|g|^2),&
    \end{array}
    \right.
    $$
    which actually reduces to a single equation
    $$
    \bar{\partial} u=\overline{(g_2\partial g_1-g_1\partial g_2)}\,f/|g|^4=:v.
    $$
    Note that
    $$
    \partial\bar{\partial} \log |g|^2 =|g|^{-4}(g_2\partial g_1-g_1\partial g_2)\wedge \overline{(g_2\partial g_1-g_1\partial g_2)},
    $$
    which implies
    $$
    |v|^2_{\Theta}\lesssim |f|^2
    $$
    where $
    \Theta=i\sum_{j,k}\Theta_{j\bar{k}} dz_j\wedge d\bar{z}_k$ and $|\cdot|_\Theta$ stands for the point-wise norm with respect to $\Theta$.
    
Suppose $d$ is $C^2$ outside a compact set $K\subset \Omega$.  Take a smooth cut-off function $0\le \chi\le 1$ such that $\chi=1$ near $\partial \Omega$ and $\chi=0$ in a neighborhood of $K$.     Similar as \cite{Andersson}, we consider the operator
     $$
     L=\chi |\partial d|^{-2}{\sum}_j \partial (-d)/\partial \bar{z}_j\cdot \partial/\partial z_j.
     $$
Take a $C^2$ defining function $\hat{\rho}\geq-d$ such that $\hat{\rho}=-d$ near $\partial\Omega$. We shall apply Lemma \ref{lm:Green} with $\rho_\varepsilon:=\hat{\rho}+\varepsilon$, where $0<\varepsilon\ll1$.  We have
    \begin{equation}\label{eq:2generator_1}
    \int_{\Omega_\varepsilon} v\cdot \bar{\omega} e^{-\varphi} = -\int_{\Omega_\varepsilon}
    L(-\rho_\varepsilon) v\cdot \bar{\omega} e^{-\varphi} +  \int_{\Omega_\varepsilon} (1-L(\rho_\varepsilon)) v\cdot \bar{\omega} e^{-\varphi}. 
    \end{equation}
   Suppose $\omega\in {\rm Ker\,}S\cap D_{\mathcal N}(\Omega_\varepsilon)$.    Since $L(\rho_\varepsilon)=L(\hat{\rho})=1$ near $\partial\Omega$, it follows that 
  \begin{eqnarray}
 \left| \int_{\Omega_\varepsilon} (1-L(\rho_\varepsilon)) v\cdot \bar{\omega} e^{-\varphi}\right|^2 &\lesssim&  \left|\int_{\{L(\rho_\varepsilon)\neq 1\}} (-\rho_\varepsilon) v\cdot \bar{\omega}  e^{-\varphi}\right|^2\nonumber \\
 & \lesssim & \int_{\{L(\rho_\varepsilon)\neq1\}} (-\rho_\varepsilon)^{2-r} |v|^2_\Theta e^{-\varphi}\cdot \int_{\{L(\rho_\varepsilon)\neq 1\}} (-\rho_\varepsilon)^r \sum_{j,k} \Theta_{j\bar{k}} \omega_j\bar{\omega}_k e^{-\varphi}\nonumber\\
 & \lesssim &  \int_{\Omega_\varepsilon} (-\rho_\varepsilon) |f|^2 \cdot \int_{\Omega_\varepsilon} (-\rho)^r \sum_{j,k} \Theta_{j\bar{k}} \omega_j\bar{\omega}_k   e^{-\varphi}\nonumber\\
 & \lesssim & \int_{\partial \Omega_\varepsilon} |f|^2 d\sigma_\varepsilon\cdot  \frac1{1-r}
 \int_{\Omega_\varepsilon} |T^\ast_\varphi \omega|^2 (-\rho)^{r} e^{-\varphi}\nonumber\\
 & \lesssim & \int_{\partial \Omega} |f|^2 d\sigma \cdot  \frac1{1-r}
 \int_{\Omega_\varepsilon} |T^\ast_\varphi \omega|^2 (-\rho)^{r} e^{-\varphi}\label{eq:2generator_2}
  \end{eqnarray}  
 where the third inequality follows from the fact that $-\rho_\varepsilon\le d\asymp -\rho$ on $\Omega_\varepsilon$ and the fourth inequality follows from  Lemma \ref{lm:Green} and \eqref{eq:Key}.  Here and in what follows the implicit constants depend only on $\Omega$ and $\|\varphi\|_{L^\infty(\Omega)}$.
    Integral by parts gives
     \begin{eqnarray*}
      -\int_{\Omega_\varepsilon} L(-\rho_\varepsilon)v\cdot \bar{\omega} e^{-\varphi}
     & = & -\int_{\Omega_\varepsilon}  \sum_j \chi |\partial d|^{-2}\frac{\partial (-d)}{\partial \bar{z}_j} \frac{\partial (-\rho_\varepsilon)}{\partial z_j}\, v\cdot 
    \bar{\omega} e^{-\varphi}\\
    & = & \int_{\Omega_\varepsilon} (-\rho_\varepsilon) \sum_j \frac{\partial}{\partial z_j}\left(\chi |\partial d|^{-2} \frac{\partial (-d)}{\partial \bar{z}_j}\right)
              v\cdot  \bar{\omega} e^{-\varphi}\\
        && +   \int_{\Omega_\varepsilon} (-\rho_\varepsilon) \sum_{j,k} \chi  |\partial d|^{-2} \frac{\partial (-d)}{\partial \bar{z}_j}
              \frac{\partial v_k}{\partial z_j}\, \bar{\omega}_k e^{-\varphi}\\
              && +    \int_{\Omega_\varepsilon} (-\rho_\varepsilon) \sum_{j,k} \chi  |\partial d|^{-2} \frac{\partial (-d)}{\partial \bar{z}_j}
              v_k \overline{\frac{\partial \omega_k}{\partial \bar{z}_j}} e^{-\varphi}\\
              && - \int_{\Omega_\varepsilon} (-\rho_\varepsilon) \sum_j \chi  |\partial d|^{-2} \frac{\partial (-d)}{\partial \bar{z}_j}\frac{\partial \varphi}{\partial z_j}\,
              v\cdot  \bar{\omega} e^{-\varphi}\\
               &=:& I_1+I_2+I_3+I_4.
           \end{eqnarray*}
     Similar as \eqref{eq:2generator_2},  we may verify
            \begin{eqnarray*}
            |I_1|^2
 & \lesssim & \left|\int_{\Omega_\varepsilon}(-\rho_\varepsilon)v\cdot\overline{\omega}e^{-\varphi}\right|^2  \lesssim \|f\|_{H^2(\Omega)}^2 \cdot  \frac1{1-r}
 \int_{\Omega_\varepsilon} |T^\ast_\varphi \omega|^2 (-\rho)^{r} e^{-\varphi}.               
   \end{eqnarray*}
 Set $\overline{g_2\partial g_1-g_1\partial g_2}=:v'=\sum_k v'_k d\bar{z}_k$. Since $\partial v_k/\partial z_j=v_k'\partial (f|g|^{-4})/\partial z_j$, we have
   \begin{eqnarray*}
   |I_2|^2 & \le & \int_{\Omega_\varepsilon} (-\rho_\varepsilon)^{2-r} \left|\sum_{j} \chi |\partial d|^{-2} \frac{\partial (-d)}{\partial \bar{z}_j} \frac{\partial (f|g|^{-4})}{\partial z_j}\right|^2 |v'|^2_\Theta e^{-\varphi}\cdot \int_{\Omega_\varepsilon} (-\rho_\varepsilon)^r \sum_{j,k} \Theta_{j\bar{k}} \omega_j\bar{\omega}_k e^{-\varphi} \\
   & \lesssim & \int_{\Omega_\varepsilon} (-\rho_\varepsilon) \left(|\partial f|^2+|f|^2 |\partial g|^2\right)\cdot \int_{\Omega_\varepsilon}(-\rho)^r \sum_{j,k} \Theta_{j\bar{k}} \omega_j\bar{\omega}_k e^{-\varphi}  \\
    & \lesssim & \int_{\Omega_\varepsilon} (-\rho_\varepsilon) \left(|\partial f|^2+|f|^2 \Delta| g|^2\right)\cdot \frac1{1-r}
 \int_{\Omega_\varepsilon} |T^\ast_\varphi \omega|^2 (-\rho)^{r} e^{-\varphi}\\
       & \lesssim &  \int_{\partial \Omega_\varepsilon} |f|^2 d\sigma_\varepsilon\cdot  \frac1{1-r}
 \int_{\Omega_\varepsilon} |T^\ast_\varphi \omega|^2 (-\rho)^{r} e^{-\varphi}\ \ \ \ \ \ (\text{by}\ \eqref{eq:G_1}\ \text{and}\ \eqref{eq:G_3})\\
 & \lesssim & \|f\|_{H^2(\Omega)}^2 \cdot  \frac1{1-r}
 \int_{\Omega_\varepsilon} |T^\ast_\varphi \omega|^2 (-\rho)^{r} e^{-\varphi} .         
          \end{eqnarray*}
Analogously, we have    
\begin{eqnarray*}
|I_3|^2 & \le &  \int_{\Omega_\varepsilon} (-\rho_\varepsilon)^{2-r} \sum_{j,k} \chi^2 |\partial d|^{-4} \left|\frac{\partial (-d)}{\partial \bar{z}_j}\right|^2  | v_k|^2 e^{-\varphi}\cdot \int_{\Omega_\varepsilon} (-\rho_\varepsilon)^r \sum_{j,k} \left|\frac{\partial \omega_k}{\partial \bar{z}_j}\right|^2e^{-\varphi}\\
& \lesssim &  \int_{\Omega_\varepsilon} (-\rho_\varepsilon) \sum_k |v_k|^2 \cdot \int_{\Omega_\varepsilon} (-\rho)^r \sum_{j,k} \left|\frac{\partial \omega_k}{\partial \bar{z}_j}\right|^2 e^{-\varphi}\\
& \lesssim & \int_{\Omega_\varepsilon} (-\rho_\varepsilon) |f|^2 |\partial g|^2  \cdot  \frac1{1-r}
 \int_{\Omega_\varepsilon} |T^\ast_\varphi \omega|^2 (-\rho)^{r} e^{-\varphi}\\
 & \lesssim & \|f\|_{H^2(\Omega)}^2 \cdot  \frac1{1-r}
 \int_{\Omega_\varepsilon} |T^\ast_\varphi \omega|^2 (-\rho)^{r} e^{-\varphi}\ \ \ (\text{by}\ \eqref{eq:G_1}),
               \end{eqnarray*} 
               and
\begin{eqnarray*}
            |I_4|^2 & \lesssim & \int_{\Omega_\varepsilon} (-\rho_\varepsilon)^{2-r} |\partial \varphi|^2 |v|^2_\Theta e^{-\varphi}\cdot \int_{\Omega_\varepsilon} (-\rho_\varepsilon)^r \sum_{j,k} \Theta_{j\bar{k}} \omega_j\bar{\omega}_k e^{-\varphi}\\
            &\lesssim & \int_{\Omega_\varepsilon} (-\rho_\varepsilon) |\partial \varphi|^2 |f|^2 \cdot \int_{\Omega_\varepsilon} (-\rho)^r \sum_{j,k} \Theta_{j\bar{k}} \omega_j\bar{\omega}_k e^{-\varphi}\\
             & \lesssim & \|f\|_{H^2(\Omega)}^2\cdot  \frac1{1-r}
 \int_{\Omega_\varepsilon} |T^\ast_\varphi \omega|^2 (-\rho)^{r} e^{-\varphi}\ \ \ (\text{by}\ \eqref{eq:G_2}).
                         \end{eqnarray*}
               It follows that 
               $$
               \left|\int_{\Omega_\varepsilon} v\cdot \bar{\omega} e^{-\varphi}\right|^2 \lesssim  \|f\|_{H^2(\Omega)}^2 \cdot \frac1{1-r}
 \int_{\Omega_\varepsilon} |T^\ast_\varphi \omega|^2 (-\rho)^{r} e^{-\varphi}
                $$    
                for all $\omega\in {\rm Ker\,}S\cap D_{\mathcal N}(\Omega_\varepsilon)$, hence for all  $\omega\in {\rm Ker\,}S\cap {\rm Dom}(T^\ast_\varphi)$.  For each $\omega\in {\rm Dom}(T^\ast_\varphi)$, we write $\omega=\omega_1+\omega_2\in {\rm Ker\,}S\oplus ({\rm Ker}\,S)^\bot$. Since the range ${\rm R}(T)$ of $T$ is contained in ${\rm Ker\,}S$, so $T^\ast_\varphi \omega_2=0$. On the other hand, we have $\int_{\Omega_\varepsilon} v\cdot \bar{\omega}_2 e^{-\varphi}=0$. Thus the previous inequality remains valid for all $\omega\in {\rm Dom}(T^\ast_\varphi)$. It follows that the linear functional
                $$
               F: (-\rho)^{r/2} T^\ast_\varphi \omega \rightarrow \int_{\Omega_\varepsilon} \omega\cdot \bar{v} e^{-\varphi}
                                $$
            satisfies $\|F\|\lesssim (1-r)^{-1/2} \|f\|_{H^2(\Omega)}$. The Hahn-Banach theorem combined with the Riesz theorem gives a function  $\tilde{u}_\varepsilon$ on $\Omega_\varepsilon$ which satisfies  $\|\tilde{u}_\varepsilon\|_\varphi\lesssim (1-r)^{-1/2} \|f\|_{H^2(\Omega)}$  and 
            $$
            \int_{\Omega_\varepsilon} v\cdot \bar{\omega} e^{-\varphi} = \int_{\Omega_\varepsilon} \tilde{u}_\varepsilon \cdot \overline{(-\rho)^{r/2} T^\ast_\varphi \omega}\, e^{-\varphi},\ \ \ \forall\,   \omega\in {\rm Dom}(T^\ast_\varphi).
                     $$ 
                     If we set $u_\varepsilon=(-\rho)^{r/2}\tilde{u}_\varepsilon$, then we obtain   $T u_\varepsilon=v$ with 
                     $$
                    (1-r) \int_{\Omega_\varepsilon} |u_\varepsilon|^2 (-\rho)^{-r} e^{-\varphi} \lesssim \|f\|^2_{H^2(\Omega)}.
                     $$
                     We may take a weak limit $u$ of $\{u_\varepsilon\}$ satisfying $\bar{\partial}u=v$ (in the sense of distributions) on $\Omega$ and
                     $$
          (1-r) \int_{\Omega} |u|^2 (-\rho)^{-r}\lesssim  (1-r) \int_{\Omega} |u|^2 (-\rho)^{-r} e^{-\varphi} \lesssim \|f\|^2_{H^2(\Omega)}.
                                          $$
             If $h=(h_1,h_2)$ is given by \eqref{eq:h},  then $h$ is holomorphic and satisfies $h\cdot g=f$,  
             \begin{eqnarray*}
        (1-r) \int_{\Omega} |h|^2 (-\rho)^{-r}  &\lesssim& (1-r) \int_{\Omega} |f|^2 (-\rho)^{-r}  +(1-r) \int_{\Omega} |u|^2 (-\rho)^{-r} \\
              & \lesssim & (1-r) \int_{\Omega} |f|^2 (-\rho)^{-r} +\|f\|^2_{H^2(\Omega)}.
                           \end{eqnarray*}   
                           Letting $r\rightarrow 1-$, we obtain $\|h\|_{H^2(\Omega)}\lesssim \|f\|_{H^2(\Omega)}$.       
            \section{The general case}               
In this section we shall prove Theorem \ref{th:Main}. Let $U$ be a bounded pseudoconvex domain with $C^2-$boundary. For a  weight function $\varphi\in C^2(\overline{U})$,  we define $L^2_{(0,q)}(U,\varphi)^{\oplus m}$ to be the space of vector-valued $(0,q)-$forms $\omega=(\omega_1,\cdots,\omega_m)$, where
$$
\omega_k= {\sum_{|J|=q}}' \omega_{kJ}  d\bar{z}_J\ \ \ \text{with}\ \ \ \int_U |\omega_{kJ}|^2 e^{-\varphi}<\infty,\ \ \ \forall\,k,J.
$$
Given $\omega,\zeta\in L^2_{(0,q)}(U,\varphi)^{\oplus m}$,   define
\begin{eqnarray*}
\langle\omega,\zeta\rangle (z)&:=&\sum^m_{k=1}{\sum_{|J|=q}}' \omega_{kJ}(z)\,\overline{\zeta_{kJ}(z)},\ \ \ z\in{U};\\
(\omega,\zeta)_\varphi&:=&\int_U \langle\omega,\zeta\rangle e^{-\varphi};\\
\|\omega\|^2_\varphi&:=&(\omega,\omega)_\varphi.
\end{eqnarray*}
In what follows,  we shall use the same symbols as above, for the sake of simplicity.   The $\bar{\partial}-$operator 
induces two closed and densely defined  operators
$$
T: L^2 (U,\varphi)^{\oplus m}\rightarrow L^2_{(0,1)}(U,\varphi)^{\oplus m},\ \ \  S: L^2_{(0,1)}(U,\varphi)^{\oplus m}\rightarrow L^2_{(0,2)}(U,\varphi)^{\oplus m}.
$$  
Set
$$
D_{\mathcal N}(U)^{\oplus m}=\left\{\omega\in C^1_{(0,1)}(\overline{U})^{\oplus m}: {\sum}_{\nu=1}^n  \omega_{k\nu} {\partial \rho}/{\partial z_\nu}=0\ \text{on\ }\partial{U},  1\le k\le m\right\}.
$$
 Let $T^\ast_\varphi$  denote  the adjoint of $T$ with respect to the inner product $(\cdot,\cdot)_\varphi$.   Since 
$$
S \omega=(S \omega_1,\cdots, S \omega_m)\ \ \ \text{and}\ \ \ T^\ast_\varphi \omega=(T^\ast_\varphi \omega_1,\cdots, T^\ast_\varphi \omega_m),
$$
it follows that  $D_{\mathcal N}(U)^{\oplus m}$  lies dense in ${\rm Dom}(T^\ast_\varphi)\cap {\rm Dom}(S)$ with respect to the graph norm
$$
\omega\rightarrow \|\omega\|_{\varphi}+\|T^\ast_\varphi \omega\|_{\varphi}+\|S \omega\|_{\varphi}
$$
and
\begin{eqnarray}\label{eq:m-MKH}
 \|S \omega\|_\varphi^2 + \|T^\ast_\varphi \omega\|_\varphi^2 
& \geq &  \sum_{k=1}^m\sum_{\mu,\nu=1}^n\int_{U} \frac{\partial^2 \varphi }{\partial z_\mu\partial \bar{z}_\nu} \omega_{k\mu}\bar{\omega}_{k\nu} e^{-\varphi}\\
&& + \sum_{k=1}^m\sum_{\mu,\nu=1}^n\int_{U}\left|\frac{\partial\omega_{k\mu}}{\partial\bar z_\nu}\right|^2 e^{-\varphi},\nonumber
\end{eqnarray}
 for all $\omega\in D_{\mathcal N}(U)^{\oplus m}$.

Now let $\Omega$ be as Theorem \ref{th:Main} and suppose $\varphi\in C^2(\Omega)$.  Again we set 
$
\Omega_\varepsilon=\{d >\varepsilon\},
$
where $0<\varepsilon\ll1$. 
For $g=(g_1,g_2,\cdots,g_m)$, we write 
$$
|g|^2=|g_1|^2+\cdots+|g_m|^2,\ \ \ |\partial g|^2=|\partial g_1|^2+\cdots+|\partial g_m|^2.
$$
Consider the following vector-valued $\bar{\partial}-$equation on $\Omega_\varepsilon$
\begin{eqnarray}\label{eq:dbar}
T u=v:=f\bar{\partial}(\bar g/|g|^2),\ \ \ g\cdot u=0.
\end{eqnarray}
Suppose \eqref{eq:dbar} admits a solution $u_\varepsilon$ with the following estimate 
\begin{equation}\label{eq:general_1}
          (1-r) \int_{\Omega_\varepsilon} |u_\varepsilon|^2 (-\rho)^{-r} \leq C(\delta)^2  \|f\|^2_{H^2(\Omega)},\ \ \ \forall\,0<r<1,
  \end{equation}
  where  $C(\delta)=C\delta^{-2}|\log \delta|$, $0<\delta\ll1$, and $C$ is a generic constant independent of $m,\varepsilon,r$. Let $u$ be a weak limit of $u_\varepsilon$.  It follows that   
           $h:=f\bar{g}/|g|^2-u\in \mathcal O(\Omega)^{\oplus m}$ satisfies $h\cdot g=f$ and 
\begin{eqnarray*}
          \|h\|^2_{H^2(\Omega)} &=& \lim_{r\rightarrow 1-} (1-r) \int_{\Omega} |h|^2 (-\rho)^{-r} \nonumber\\
         & \le & C \delta^{-2} \|f\|^2_{H^2(\Omega)}+ C\delta^{-4}|\log \delta|^2 \|f\|^2_{H^2(\Omega)}\\
         &  \le & C(\delta)^2 \|f\|^2_{H^2(\Omega)} .
\end{eqnarray*}

          It remains to find a solution of \eqref{eq:dbar} which verifies \eqref{eq:general_1}.  
 Set
$$
\mathcal{H}_q(\Omega_\varepsilon)=\left\{\omega\in L^2_{(0,q)}(\Omega_\varepsilon,\varphi)^{\oplus m}:g\cdot \omega =0\right\},\ \ \ q=0,1,2,
$$
and
$$
\mathcal{D}_1 (\Omega_\varepsilon)=\left\{\omega\in D_\mathcal N(\Omega_\varepsilon)^{\oplus m}: g\cdot \omega =0\right\}.
$$
It is not difficult to verify that the orthogonal complement of the subspace $\mathcal{H}_0(\Omega_\varepsilon)$ in $L^2(\Omega_\varepsilon,\varphi)^{\oplus m}$ is given by 
$$
\mathcal{H}_0(\Omega_\varepsilon)^\bot=\{c \bar g: c\in L^2(\Omega_\varepsilon,\varphi)\}.
$$
Clearly,  the set
$$
\mathcal{D}_0^\bot(\Omega_\varepsilon):=\left\{c \bar g: c\in C^\infty_0(\Omega_\varepsilon) \right\}
$$
lies dense in $\mathcal{H}_0(\Omega_\varepsilon)^\bot$.

Since $g$ is holomorphic,  it follows that the $\bar{\partial}-$operator induces two closed and densely defined  operators
$$
T_{\mathcal H}: \mathcal H_0(\Omega_\varepsilon)\rightarrow \mathcal H_1(\Omega_\varepsilon),\ \ \ S_{\mathcal H}: \mathcal H_1(\Omega_\varepsilon)\rightarrow \mathcal H_2(\Omega_\varepsilon).
$$
	Let $T^\ast_{\mathcal{H},\varphi}$ be the adjoint of $T_\mathcal H$.   
	The relationship between $T^\ast_\varphi$ and $T^\ast_{\mathcal{H},\varphi}$ is given as follows.
	
\begin{lemma}[cf. \cite{Ohsawa},  p.  89]\footnote{The original formulation in \cite{Ohsawa} seems incorrect.}\label{lm:Ohsawa}
For any $\omega\in\mathcal{D}_1(\Omega_\varepsilon)$, we have
$$
T^\ast_{\mathcal{H},\varphi}\omega=T^\ast_{\varphi}\omega-\left(\sum_{k=1}^m\sum_{\nu=1}^n \frac{\partial}{\partial z_\nu}(g_k/|g|^2) \omega_{k\nu} \right) \bar g=:T^\ast_{\varphi}\omega-\Phi_g \omega.
$$
\end{lemma}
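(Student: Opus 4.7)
The plan is to identify $T^\ast_{\mathcal{H},\varphi}\omega$ with the orthogonal projection of $T^\ast_{\varphi}\omega$ onto $\mathcal{H}_0(\Omega_\varepsilon)$. Since $\mathcal{H}_0(\Omega_\varepsilon)^\bot = \{c\bar g : c\in L^2(\Omega_\varepsilon,\varphi)\}$, a direct calculation shows that this projection is $\xi \mapsto \frac{\xi\cdot g}{|g|^2}\bar g$. Hence the lemma reduces to verifying the two identities
$$ g\cdot T^\ast_\varphi \omega \;=\; \sum_{k=1}^m\sum_{\nu=1}^n\frac{\partial g_k}{\partial z_\nu}\,\omega_{k\nu},\qquad \Phi_g\omega \;=\; \frac{1}{|g|^2}\sum_{k=1}^m\sum_{\nu=1}^n\frac{\partial g_k}{\partial z_\nu}\,\omega_{k\nu},$$
because together they give $g\cdot T^\ast_\varphi\omega = |g|^2\,\Phi_g\omega$. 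This places $T^\ast_\varphi\omega - \Phi_g\omega\,\bar g$ in $\mathcal{H}_0(\Omega_\varepsilon)$; and for any $\eta\in \mathcal{H}_0(\Omega_\varepsilon)\cap\mathrm{Dom}(T)$ one has $(\eta,\Phi_g\omega\,\bar g)_\varphi = \int \overline{\Phi_g\omega}\,(\eta\cdot g)\,e^{-\varphi}=0$, so that $(T\eta,\omega)_\varphi = (\eta,T^\ast_\varphi\omega)_\varphi = (\eta,T^\ast_\varphi\omega - \Phi_g\omega\,\bar g)_\varphi$, which is exactly the defining identity of $T^\ast_{\mathcal{H},\varphi}\omega$.

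For the first identity I would use the componentwise formal adjoint formula $(T^\ast_\varphi\omega)_k = -\sum_\nu \partial\omega_{k\nu}/\partial z_\nu + \sum_\nu \omega_{k\nu}\,\partial\varphi/\partial z_\nu$, which is valid on $\mathcal{D}_1(\Omega_\varepsilon)$ because the boundary condition built into $D_{\mathcal N}(\Omega_\varepsilon)^{\oplus m}$ kills the boundary term arising in the integration by parts that defines $T^\ast_\varphi$. Contracting with $g_k$ gives
$$ g\cdot T^\ast_\varphi\omega = -\sum_{k,\nu} g_k\,\frac{\partial\omega_{k\nu}}{\partial z_\nu} + \sum_\nu \frac{\partial\varphi}{\partial z_\nu}\sum_k g_k\omega_{k\nu};$$
the second sum vanishes since the constraint $g\cdot\omega=0$ means $\sum_k g_k\omega_{k\nu}\equiv 0$ for each $\nu$. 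Differentiating that same constraint in $z_\nu$ yields $\sum_k g_k\,\partial\omega_{k\nu}/\partial z_\nu = -\sum_k (\partial g_k/\partial z_\nu)\omega_{k\nu}$, and substituting produces the claimed right-hand side.

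For the second identity, I would expand
$$\frac{\partial}{\partial z_\nu}\!\left(\frac{g_k}{|g|^2}\right) = \frac{1}{|g|^2}\frac{\partial g_k}{\partial z_\nu} - \frac{g_k}{|g|^4}\sum_{j=1}^m \frac{\partial g_j}{\partial z_\nu}\,\bar g_j,$$
multiply by $\omega_{k\nu}$, and sum over $k,\nu$; the second term carries the factor $\sum_k g_k\omega_{k\nu}=0$ and so disappears. I do not anticipate a serious obstacle: the entire argument is bookkeeping built on the explicit form of $T^\ast_\varphi$ and the pointwise identity $g\cdot\omega=0$ plus its one $z_\nu$-derivative. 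The only delicate point — and presumably the source of the error noted in the footnote — is that the two superficially distinct expressions $\Phi_g\omega$ and $|g|^{-2}\sum_{k,\nu}(\partial g_k/\partial z_\nu)\omega_{k\nu}$ agree \emph{only} under the constraint $g\cdot\omega=0$, so the formula for $T^\ast_{\mathcal{H},\varphi}$ must be written in the $\Phi_g$ form if it is to be meaningful intrinsically on $\mathcal{D}_1(\Omega_\varepsilon)$.
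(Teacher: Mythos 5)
Your proof is correct, and it shares the paper's structural skeleton --- the identification of $T^\ast_{\mathcal H,\varphi}\omega$ with the orthogonal projection of $T^\ast_{\varphi}\omega$ onto $\mathcal H_0(\Omega_\varepsilon)$, justified at the end by the defining identity of the adjoint --- but your computation of that projection takes a genuinely different route. The paper never writes the coordinate formula for $T^\ast_\varphi$ and never differentiates the constraint $g\cdot\omega=0$: it expands the $\mathcal H_0(\Omega_\varepsilon)^\bot$-component in an orthonormal basis ${\bf e}_j=\chi_j\bar g/|g|$ coming from an orthonormal basis $\{\chi_j\}\subset C^\infty_0(\Omega_\varepsilon)$ of $L^2(\Omega_\varepsilon,\varphi)$, computes each coefficient weakly via $(T^\ast_\varphi\omega,{\bf e}_j)_\varphi=(\omega,\bar\partial{\bf e}_j)_\varphi$, uses $g\cdot\omega=0$ only to discard the $\bar\partial(|g|\chi_j)\,\bar g/|g|^2$ term, and then identifies $T^\ast_{\mathcal H,\varphi}\omega$ with this projection by testing against $\eta\in\mathcal H_0(\Omega_\varepsilon)\cap C^\infty_0(\Omega_\varepsilon)^{\oplus m}$ and density. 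You instead compute the $\bar g$-component pointwise: the explicit first-order expression of $T^\ast_\varphi$ on $D_{\mathcal N}(\Omega_\varepsilon)^{\oplus m}$ (legitimate, since the boundary condition kills the boundary term), contraction with $g$, and the $z_\nu$-derivative of the constraint give $g\cdot T^\ast_\varphi\omega=\sum_{k,\nu}(\partial g_k/\partial z_\nu)\,\omega_{k\nu}$, which together with your expansion of $\partial_{z_\nu}(g_k/|g|^2)$ shows the candidate lies in $\mathcal H_0(\Omega_\varepsilon)$; testing against every $\eta\in\mathcal H_0(\Omega_\varepsilon)\cap{\rm Dom}(T)$, for which $(\eta,\Phi_g\omega)_\varphi=0$, then closes the argument. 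What your route buys: it avoids the Gram--Schmidt basis construction and the density step, and it isolates exactly where $g\cdot\omega=0$ and its derivative are used (indeed your closing remark about the two expressions agreeing only under the constraint is the right diagnosis of why the formula must be stated in the $\Phi_g$ form). What it costs: you must invoke the coordinate formula for $T^\ast_\varphi$ on $D_{\mathcal N}$, and you tacitly use $|g|^2\ge\delta$ to see that the coefficient $(T^\ast_\varphi\omega)\cdot g/|g|^2$ lies in $L^2(\Omega_\varepsilon,\varphi)$, so that the pointwise projection really is the Hilbert-space projection onto $\{c\bar g: c\in L^2(\Omega_\varepsilon,\varphi)\}$ --- harmless here, but worth stating. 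Finally, note a purely notational mismatch: in the paper $\Phi_g\omega$ denotes the vector including the factor $\bar g$, whereas in your second displayed identity you use the same symbol for its scalar coefficient.
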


\begin{proof}
We include a proof here for the sake of completeness.  
Since $L^2(\Omega_\varepsilon,\varphi)$ is a separable Hilbert space and $C^\infty_0(\Omega_\varepsilon)$ lies dense in $L^2(\Omega_\varepsilon,\varphi)$, we may choose by the Gram-Schmidt method a complete orthonormal basis $\{{\bf e}_j\}\subset \mathcal{D}_0^\bot(\Omega_\varepsilon)$ of $\mathcal{H}_0(\Omega_\varepsilon)^\bot$.  Let $P\omega$ be the orthogonal projection of $T^\ast_\varphi \omega$ to $\mathcal{H}_0(\Omega_\varepsilon)$,  i.e.,
$$
P\omega =T^\ast_\varphi \omega - \sum_j (T^\ast_\varphi \omega,{\bf e}_j)_\varphi {\bf e}_j.
$$
Put ${\bf e}_j=\chi_j \bar{g}/|g|$.  Clearly $\chi_j\in C^\infty_0(\Omega_\varepsilon)$ and $\{\chi_j\}$ forms a complete orthonormal basis of $L^2 (\Omega_\varepsilon,\varphi)$.  Since $g\cdot \omega=0$  and
$$
\bar{\partial} {\bf e}_j=\bar{\partial}(|g|\chi_j) \bar{g}/|g|^2+|g|\chi_j \bar{\partial}(\bar{g}/|g|^2),
$$
it follows that
\begin{eqnarray*}
(T^\ast_\varphi \omega,{\bf e}_j)_\varphi & = & (\omega, \bar{\partial}{\bf e}_j)_\varphi=(\omega,|g|\chi_j \bar{\partial}(\bar{g}/|g|^2))_\varphi\\
& = & \sum_k \sum_\nu \int_{\Omega_\varepsilon} |g| \frac{\partial}{\partial z_\nu}(g_k/|g|^2) \omega_{k\nu} \bar \chi_j e^{-\varphi}.
\end{eqnarray*}
Thus
\begin{eqnarray*}
P \omega & = & T^\ast_\varphi \omega- \sum_j  \int_{\Omega_\varepsilon} G \bar \chi_j e^{-\varphi} \,{\bf e}_j,
\end{eqnarray*}
where
\[
G:=|g|\sum_k\sum_\nu\frac{\partial}{\partial z_\nu}(g_k/|g|^2) \omega_{k\nu}.
\]
Since $\{\chi_j\}$ is a complete orthonormal basis of $L^2 (\Omega_\varepsilon,\varphi)$, we have
\begin{eqnarray*}
P\omega
& = & T^\ast_\varphi\omega-\sum_j(G,\chi_j)_{L^2(\Omega_\varepsilon,\varphi)}{\bf e}_j\\
& = & T^\ast_\varphi\omega-\left\{{\sum}_j(G,\chi_j)_{L^2(\Omega_\varepsilon,\varphi)}\chi_j\right\}\frac{\bar{g}}{|g|}\\
& = & T^\ast_\varphi\omega-G\frac{\bar{g}}{|g|}\\
& = & T^\ast_\varphi \omega-\Phi_g\omega,
\end{eqnarray*}
where $(\cdot,\cdot)_{L^2(\Omega_\varepsilon,\varphi)}$ is the inner product of the Hilbert space $L^2(\Omega_\varepsilon,\varphi)$.

For any $\eta \in \mathcal H_0(\Omega_\varepsilon)\cap C_0^\infty(\Omega_\varepsilon)^{\oplus m}$, we have
$$
(T^\ast_{\mathcal H,\varphi} \omega,\eta)_\varphi=(\omega,\bar{\partial} \eta)_\varphi=(T^\ast_\varphi \omega,\eta)_\varphi=(P\omega,\eta)_\varphi.
$$
Since $\mathcal H_0(\Omega_\varepsilon)\cap C_0^\infty(\Omega_\varepsilon)^{\oplus m}$ lies dense in $H_0(\Omega_\varepsilon)$,  it follows that $T^\ast_{\mathcal H,\varphi} \omega=P\omega$.
\end{proof}

\begin{lemma}\label{lm:density}
	$\mathcal D_1(\Omega_\varepsilon)$  lies dense in $\mathrm{Dom}(T^\ast_{\mathcal{H},\varphi})\cap \mathrm{Dom}(S_{\mathcal H})$ with respect to the graph norm
	$$
	\omega\mapsto \|\omega\|_\varphi + \| T^\ast_{\mathcal{H},\varphi} \omega\|_\varphi + \| S_{\mathcal H}\omega\|_\varphi.
	$$
	\end{lemma}
	
	\begin{proof}
	Given $\omega\in \mathrm{Dom}(T^\ast_{\mathcal{H},\varphi})\cap \mathrm{Dom}(S_{\mathcal H})$,  take a sequence $\{\omega^j\}\subset D_\mathcal N(\Omega_\varepsilon)^{\oplus m}$ such that 
	$$
	\|\omega^j-\omega\|_\varphi \rightarrow 0,\ \|S\omega^j-S\omega\|_\varphi \rightarrow 0,\ \|T^\ast_\varphi \omega^j - T^\ast_\varphi \omega\|_\varphi\rightarrow 0.
	$$
	Set $\tilde{\omega}^j:=\omega^j-\frac{g\cdot \omega^j}{|g|^2}\bar{g}$.  Since $g\cdot \tilde{\omega}^j=g\cdot \omega^j-\frac{g\cdot \omega^j}{|g|^2}|g|^2=0$,  we see that $\tilde{\omega}^j\in \mathcal H_1(\Omega_\varepsilon)$.  Moreover,  since $g\cdot \omega=0$,  we have
	$$
	\tilde{\omega}^j - \omega = \omega^j-\omega - \frac{g\cdot \omega^j}{|g|^2}\bar{g} =  \omega^j-\omega - \frac{g\cdot (\omega^j-\omega)}{|g|^2}\bar{g},
	$$ 
	so that $\|\tilde{\omega}^j-\omega\|_\varphi \rightarrow 0$,  for both $g$ and $|g|^{-1}$ are bounded. 
	
	 Analogously,  since 
	\begin{eqnarray*}
	S \tilde{\omega}^j  & = & S \omega^j + g\cdot\omega^j\wedge S(\bar{g}/|g|^2) -S(g\cdot\omega^j)\cdot \bar{g}/|g|^2\\
	& = & S \omega^j + g\cdot\omega^j\wedge S(\bar{g}/|g|^2) - g\cdot S(\omega^j)\cdot \bar{g}/|g|^2, 
	\end{eqnarray*}
	and $ g\cdot S\omega = S(g\cdot \omega)  =0$,  we may write 
	$$
	S\tilde\omega^j-S\omega = S(\omega^j-\omega) + g\cdot(\omega^j-\omega) S(\bar{g}/|g|^2) - g\cdot(S\omega^j-S\omega) \bar{g}/|g|^2,
	$$
	so that $\|S\tilde{\omega}^j-S\omega\|_\varphi\rightarrow 0$,  and $\|S_\mathcal H \tilde{\omega}^j-S_\mathcal H\omega\|_\varphi\rightarrow 0$.
	
	It remains to verify $\|T^\ast_{\mathcal H,\varphi}\tilde{\omega}^j- T^\ast_{\mathcal H,\varphi}\tilde{\omega}\|_\varphi \rightarrow 0$.  In view of Lemma \ref{lm:Ohsawa} and the fact $\|\tilde{\omega}^j-\omega\|_\varphi \rightarrow 0$,  it suffices to show
	$$
	\|T^\ast_\varphi \tilde{\omega}^j-T^\ast_\varphi \omega \|_\varphi \rightarrow 0.
	$$
	This in turn follows from the following facts:
	$$
	T^\ast_\varphi \tilde{\omega}^j = T^\ast_\varphi \omega^j - T^\ast_\varphi (g\cdot \omega^j \bar{g}/|g|^2)
	$$
	\begin{eqnarray*}
	T^\ast_\varphi (g\cdot \omega^j \bar{g}_k/|g|^2) & = & T^\ast_\varphi (g\cdot (\omega^j-\omega) \bar{g}_k/|g|^2) \\
	& = & \bar{g}_k T^\ast_\varphi (g\cdot (\omega^j-\omega) /|g|^2) + \sum_{\nu=1}^n \frac{\partial \bar{g}_k}{\partial z_\nu} \frac{(g\cdot(\omega^j-\omega))_\nu}{|g|^2}\\
	& = & \bar{g}_k T^\ast_\varphi (g\cdot (\omega^j-\omega) /|g|^2),\ \forall\,1\le k\le m
	\end{eqnarray*}
and
\begin{eqnarray*}
T^\ast_\varphi (g\cdot (\omega^j-\omega)/ |g|^2) & = & \sum_{l=1}^m T^\ast_\varphi(g_l(\omega^j-\omega)_l/|g|^2)\\
& = & \sum_{l=1}^m \frac{g_l}{|g|^2} T^\ast_\varphi (\omega^j-\omega)_l +\sum_{l=1}^m \sum_{\nu=1}^n \frac{\partial}{\partial z_\nu} \left(\frac{g_l}{|g|^2}\right) (\omega^j-\omega)_{l,\nu}\\
& = & \frac1{|g|^2} g\cdot T^\ast_\varphi(\omega^j-\omega) + \sum_{l=1}^m \sum_{\nu=1}^n \frac{\partial}{\partial z_\nu} \left(\frac{g_l}{|g|^2}\right) (\omega^j-\omega)_{l,\nu}
\end{eqnarray*}	
(note that $\partial (g_l/|g|^2)/\partial z_\nu$ is bounded on $\Omega_\varepsilon$).  Here
$$
\omega^j-\omega =((\omega^j-\omega)_1,\cdots,(\omega^j-\omega)_m),\ \ \  (\omega^j-\omega)_l=\sum_{\nu=1}^n (\omega^j-\omega)_{l,\nu}.
$$
	\end{proof}

Since the vector $T^\ast_{\mathcal{H},\varphi}\omega|_z$ is orthogonal to $\Phi_g \omega|_z$ for any $z\in \Omega_\varepsilon$,  we infer from Lemma \ref{lm:Ohsawa} that
$$
|T^\ast_{\varphi}\omega|^2=|\Phi_g \omega|^2+|T^\ast_{\mathcal{H},\varphi}\omega|^2
$$ 
holds point-wisely in $\Omega_\varepsilon$.  Again, let $\phi=-r\log(-\rho)$ where $0<r<1$.  Since
$$
T^\ast_{\mathcal{H},\varphi+\phi}\omega+\Phi_g \omega=T^\ast_{\varphi+\phi}\omega=T^\ast_{\varphi}\omega+\sum_{k=1}^m \sum_{\nu=1}^n \frac{\partial \phi}{\partial z_\nu} \omega_{k\nu}=T^\ast_{\mathcal{H},\varphi}\omega+\sum_{k=1}^m \sum_{\nu=1}^n \frac{\partial \phi}{\partial z_\nu} \omega_{k\nu} +\Phi_g \omega,
$$
i.e.,
$$
T^\ast_{\mathcal{H},\varphi+\phi}\omega=T^\ast_{\mathcal{H},\varphi}\omega+\sum_{k=1}^m \sum_{\nu=1}^n \frac{\partial \phi}{\partial z_\nu} \omega_{k\nu},
$$
it follows from the Cauchy-Schwarz inequality that
$$
\left|T^\ast_{\varphi+\phi}\omega\right|^2=\left|\Phi_g \omega\right|^2+\left|T^\ast_{\mathcal{H},\varphi+\phi}\omega\right|^2\leq\left|\Phi_g \omega\right|^2+(1-r)^{-1}\left|T^\ast_{\mathcal{H},\varphi}\omega\right|^2+r^{-1}\sum_{k=1}^m\left|  \sum_{\nu=1}^n \frac{\partial \phi}{\partial z_\nu} \omega_{k\nu} \right|^2.
$$ 
Applying (\ref{eq:m-MKH}) with $U$ and $\varphi$ replaced by $\Omega_\varepsilon$ and $\varphi+\phi$ respectively,  we obtain
\begin{eqnarray}\label{eq:n-Berndtsson}
&&\int_{\Omega_\varepsilon}|\Phi_g \omega|^2(-\rho)^r e^{-\varphi}+(1-r)^{-1}\int_{\Omega_\varepsilon}|T^\ast_{\mathcal{H},\varphi}\omega|^2(-\rho)^r e^{-\varphi}
+\int_{\Omega_\varepsilon}|S_{\mathcal H}\omega|^2(-\rho)^r e^{-\varphi}\\
&&\ge\int_{\Omega_\varepsilon}  \langle [i\partial\bar{\partial}\varphi+r(-\rho)^{-1}i\partial\bar{\partial}\rho,\Lambda]\omega,\omega\rangle (-\rho)^r e^{-\varphi}\nonumber\\
&& +\sum_{k=1}^m\sum_{\mu,\nu=1}^n \int_{\Omega_\varepsilon}\left|\frac{\partial\omega_{k\mu}}{\partial\bar z_\nu}\right|^2(-\rho)^r e^{-\varphi},\nonumber
\end{eqnarray}
for any $\omega\in \mathcal{D}_1(\Omega_\varepsilon)$.  Here we adopt the nonstandard notion that if $\Theta=i\sum_{\mu,\nu=1}^n \Theta_{\mu\bar\nu} dz_\mu\wedge d\bar z_\nu$ is a continuous $(1,1)-$form,  then
$$
\langle [\Theta,\Lambda] \omega,\omega\rangle:= \sum_{k=1}^m\sum_{\mu,\nu=1}^n\Theta_{\mu\bar\nu} \omega_{k\mu}\bar{\omega}_{k\nu}.  
$$
Traditionally,  the RHS of the previous equation is denoted by
$$
\langle [\Theta,\Lambda] dz_1 \wedge\cdots\wedge dz_n\wedge \omega,dz_1 \wedge\cdots\wedge dz_n\wedge\omega\rangle
$$ 
(see Demailly \cite{DemaillyBook}).  

To proceed the proof,  we need the following fundamental inequality due to Skoda:
 
\begin{lemma}[cf. \cite{Skoda}]\label{le:skoda}
For any matrix $\zeta=(\zeta_{k\mu})_{m\times n}$, we have 
$$
q\sum_k\sum_{\mu,\nu}\frac{\partial^2\log|g|^2}{\partial z_\mu\bar z_\nu}\zeta_{k\mu}\bar\zeta_{k\nu}\ge 
|g|^2\left|\sum_k\sum_\nu\frac{\partial}{\partial\bar z_\nu}(\bar g_k/|g|^2)\bar\zeta_{k\nu}\right|^2,
$$
where $q=\min\{n,m-1\}$.
\end{lemma}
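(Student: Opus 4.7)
The plan is to reduce Skoda's inequality to a clean finite-dimensional statement in $\mathbb{C}^m$. The key geometric object is the orthogonal projection $P^\perp:\mathbb{C}^m\to g^\perp$ onto the hyperplane orthogonal to $g$, given by $P^\perp v=v-\langle v,g\rangle|g|^{-2}g$ with $\langle u,v\rangle=\sum_j u_j\bar v_j$. Introduce the column vectors $a_\mu:=(\partial g_1/\partial z_\mu,\ldots,\partial g_m/\partial z_\mu)\in\mathbb{C}^m$ for $\mu=1,\ldots,n$, and for each $k$ set $\xi_k:=\sum_\mu\zeta_{k\mu}a_\mu\in\mathbb{C}^m$ and $\eta_k:=P^\perp\xi_k\in g^\perp$.

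First I would compute $i\partial\bar\partial\log|g|^2$ in this language. Since $\partial\log|g|^2/\partial z_\mu=|g|^{-2}\langle a_\mu,g\rangle$, differentiating in $\bar z_\nu$ yields the identity
$$|g|^2\,\frac{\partial^2\log|g|^2}{\partial z_\mu\partial\bar z_\nu}=\langle a_\mu,a_\nu\rangle-\frac{\langle a_\mu,g\rangle\langle g,a_\nu\rangle}{|g|^2}=\langle P^\perp a_\mu,P^\perp a_\nu\rangle.$$
Summing against $\zeta_{k\mu}\bar\zeta_{k\nu}$ and using sesquilinearity, the left-hand side of Skoda's inequality becomes $q|g|^{-2}\sum_k|\eta_k|^2$. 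A parallel expansion of $\partial(\bar g_k/|g|^2)/\partial\bar z_\nu$ gives, after rearrangement,
$$\sum_{k=1}^m\sum_{\nu=1}^n\frac{\partial(\bar g_k/|g|^2)}{\partial\bar z_\nu}\bar\zeta_{k\nu}=\frac{1}{|g|^2}\sum_{k=1}^m\overline{(P^\perp\xi_k)_k}=\frac{1}{|g|^2}\sum_{k=1}^m\overline{\eta_{k,k}},$$
so the right-hand side of the claim becomes $|g|^{-2}\left|\sum_k\eta_{k,k}\right|^2$. Thus the inequality reduces to $q\sum_k|\eta_k|^2\ge\left|\sum_k\eta_{k,k}\right|^2$.

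The crucial observation is that every $\eta_k$ lies in the common subspace $V:=P^\perp(\mathrm{span}_{\mathbb{C}}\{a_1,\ldots,a_n\})\subseteq g^\perp$, whose complex dimension is at most $\min(n,m-1)=q$. Writing $\eta_{k,k}=\langle\eta_k,e_k\rangle=\langle\eta_k,P_V e_k\rangle$, where $e_k$ is the $k$-th standard basis vector and the second equality uses $\eta_k\in V$ together with the self-adjointness of the orthogonal projection $P_V$ onto $V$, two applications of Cauchy-Schwarz give
$$\left|\sum_{k=1}^m\eta_{k,k}\right|^2\le\left(\sum_{k=1}^m|\eta_k|^2\right)\left(\sum_{k=1}^m|P_V e_k|^2\right)=(\dim V)\sum_{k=1}^m|\eta_k|^2\le q\sum_{k=1}^m|\eta_k|^2,$$
since $\sum_k|P_V e_k|^2=\mathrm{tr}(P_V)=\dim V$. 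This closes the proof.

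The main obstacle is the explicit bookkeeping of conjugates in the two differentiation identities that put both sides of Skoda's inequality into the uniform language of $P^\perp$; once these are in place, the key step---that the $\eta_k$ all live in a single subspace of dimension at most $q$---is almost immediate, and the rest is a one-line Cauchy-Schwarz computation.
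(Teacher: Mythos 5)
Your proof is correct. All three reductions check out: the identity $|g|^2\,\partial^2\log|g|^2/\partial z_\mu\partial\bar z_\nu=\langle P^\perp a_\mu,P^\perp a_\nu\rangle$ with $a_\mu=\partial g/\partial z_\mu$, the rewriting of $\sum_{k,\nu}\partial(\bar g_k/|g|^2)/\partial\bar z_\nu\,\bar\zeta_{k\nu}$ as $|g|^{-2}\sum_k\overline{\eta_{k,k}}$ (the $g$-component drops out precisely because of the projection), and the final step $\bigl|\sum_k\eta_{k,k}\bigr|^2\le(\dim V)\sum_k|\eta_k|^2\le q\sum_k|\eta_k|^2$ via $\eta_{k,k}=\langle\eta_k,P_Ve_k\rangle$ and $\sum_k|P_Ve_k|^2=\mathrm{tr}(P_V)=\dim V\le\min(n,m-1)$. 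Note that the paper itself does not prove this lemma; it simply cites Skoda. The classical argument (Skoda, and Ohsawa's simplification) is a pointwise computation in a unitary frame of $\mathbb C^m$ adapted to $g$: one expresses $i\partial\bar\partial\log|g|^2$ through the rank-one forms built from $g_j\partial g_k-g_k\partial g_j$ (exactly the paper's identity \eqref{eq:general_2}) and then applies Cauchy--Schwarz, the factor $q$ arising from counting the at most $\min(n,m-1)$ independent directions involved. Your version is a basis-free repackaging of the same Cauchy--Schwarz: phrasing everything through the projection $P^\perp$ onto $g^\perp$ and extracting $q$ from the trace of a projection makes the origin of the constant transparent and gives a self-contained proof that could replace the citation. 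One cosmetic caveat: you implicitly use $g\neq0$ (so that $g^\perp$ has dimension $m-1$ and $P^\perp$ is defined), which is harmless here since the corona data satisfy $|g|^2\ge\delta>0$, but it is worth saying.
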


Set 
$\varphi(z):=|z|^2+\lambda(z)+q\log|g(z)|^2+\psi_\delta$, where   $\psi_\delta:=\frac{\log|g(z)|^2}{|\log \delta|}$ satisfies 
\begin{equation}\label{eq:psi_delta}
-2\le \psi_\delta\le 0.
\end{equation}
Based on Lemma \ref{le:skoda},  we obtain by a similar calculation as  the case $m=2$ the following inequality: 
$$
\langle [i\partial\bar{\partial}\varphi+r(-\rho)^{-1}i\partial\bar{\partial}\rho,\Lambda]\omega,\omega\rangle \ge \langle [\Theta,\Lambda]\omega,\omega\rangle + |\Phi_g \omega|^2, 
$$
where
$$ 
\Theta=i\partial\bar{\partial}|z|^2+\frac{i\partial\bar{\partial}\log|g|^2}{|\log\delta|}.  
$$
This together with (\ref{eq:n-Berndtsson}) yield
\begin{eqnarray}\label{eq:KeyIneqInGeneral}
&&(1-r)^{-1}\int_{\Omega_\varepsilon}|T^\ast_{\mathcal{H},\varphi}\omega|^2(-\rho)^r e^{-\varphi}
+\int_{\Omega_\varepsilon}|S_\mathcal H\omega|^2(-\rho)^r e^{-\varphi}\\
&&\ge\int_{\Omega_\varepsilon}\langle[\Theta,\Lambda]\omega,\omega\rangle(-\rho)^r e^{-\varphi}+\sum_{k=1}^m\sum_{\mu,\nu=1}^n \int_{\Omega_\varepsilon}\left|\frac{\partial\omega_{k\mu}}{\partial\bar z_\nu}\right|^2(-\rho)^r e^{-\varphi}.\nonumber
\end{eqnarray}

\begin{lemma}\label{lm:elembound}
For $v=f\bar{\partial} (\bar g/|g|^2)$, we have 
\begin{equation}\label{eq:v-bound}
\langle[\Theta,\Lambda]^{-1}v,v\rangle:= \sup\{|\langle v,\omega\rangle|^2:\langle[\Theta,\Lambda]\omega,\omega\rangle\leq 1\} \leq \frac{q|\log\delta|}{\delta^2}|f|^2.
\end{equation}
\end{lemma}

\begin{proof}
Set $\Gamma:=\frac{ i\partial\bar{\partial}\log|g|^2}{|\log\delta|}$.  Thanks to Lemma \ref{le:skoda}, we have
\begin{eqnarray*}
1 & \geq & \langle[\Theta,\Lambda]\omega,\omega\rangle \ge \langle[\Gamma,\Lambda]\omega,\omega\rangle \\
& = &\frac{1}{|\log\delta|} \sum_k\sum_{\mu,\nu}\frac{\partial^2\log |g|^2}{\partial z_\mu\partial \bar z_\nu}\omega_{k\mu}\bar \omega_{k\nu} \\
& \geq &\frac{|g|^2}{q|\log\delta|} \left|\sum_{k,\nu}\frac{\partial}{\partial\bar z_\nu}(\bar g_k/|g|^2)\bar \omega_{k\nu}\right|^2,
\end{eqnarray*}
so that
\begin{eqnarray*}
|\langle v,\omega\rangle|^2 & = & \left|\sum_{k,\nu}f\frac{\partial}{\partial\bar z_\nu}(\bar g_k/|g|^2)\bar \omega_{k\nu}\right|^2\\
& \leq & \frac{q|f|^2|\log\delta|} {|g|^2}\leq\frac{q|\log\delta|}{\delta^2}|f|^2.
\end{eqnarray*}
\end{proof}

In view of the decomposition \eqref{eq:2generator_1}, we only need to estimate
\begin{eqnarray*}
-\int_{\Omega_\varepsilon}L(-\rho_\varepsilon)\langle v,\omega\rangle e^{-\varphi} & = &  -\int_{\Omega_\varepsilon} \chi |\partial d|^{-2}\sum_{\nu=1}^n\frac{\partial (-d)}{\partial\bar z_\nu}\,\frac{\partial(-\rho_\varepsilon)}{\partial z_\nu}\langle v,\omega\rangle e^{-\varphi}\\
&=&\int_{\Omega_\varepsilon}(-\rho_\varepsilon)\sum_{\nu=1}^n\frac{\partial}{\partial z_\nu}\left( \chi |\partial d|^{-2}\frac{\partial(-d)}{\partial \bar z_\nu}\right)\langle v,\omega\rangle e^{-\varphi}\\
&&+\sum_{k=1}^m\int_{\Omega_\varepsilon}(-\rho_\varepsilon)\sum_{\mu,\nu=1}^n \chi |\partial d|^{-2}\frac{\partial(-d)}{\partial \bar z_\nu}\frac{\partial v_{k\mu}}{\partial z_\nu}\bar \omega_{k\mu}e^{-\varphi}\\
&&+\sum_{k=1}^m\int_{\Omega_\varepsilon}(-\rho_\varepsilon)\sum_{\mu,\nu=1}^n \chi |\partial d|^{-2}\frac{\partial(-d)}{\partial \bar z_\nu}v_{k\mu}\overline{\frac{\partial \omega_{k\mu}}{\partial\bar z_\nu}}e^{-\varphi}\\
&&-\int_{\Omega_\varepsilon}(-\rho_\varepsilon)\sum_{\nu=1}^n \chi |\partial d|^{-2}\frac{\partial(-d)}{\partial \bar z_\nu}{\frac{\partial \varphi}{\partial z_\nu}}\langle v,\omega\rangle e^{-\varphi}\\
&=:&I_1+I_2+I_3+I_4.
\end{eqnarray*}

We also need the following elementary fact:
\begin{lemma}\label{lm:Green2}
\begin{eqnarray}\label{eq:g}
\int_{\Omega_\varepsilon}(-\rho_\varepsilon)|f|^2|\partial g|^2/|g|^4\lesssim \frac{|\log\delta|}{\delta^2} \|f\|^2_{H^2(\Omega)},
\end{eqnarray}
\begin{eqnarray}\label{eq:ag}
\int_{\Omega_\varepsilon}(-\rho_\varepsilon)|f|^2\left|\partial |g|\,\right|^2/|g|^{4}\lesssim  \frac{|\log\delta|}{\delta^2} \|f\|^2_{H^2(\Omega)}.
\end{eqnarray}
 \end{lemma}
 
\begin{proof}
 For $\psi:=\log\frac{|g|^2+\delta^2}{2}$, we have $\log\delta\lesssim\psi<0$ and 
 \begin{eqnarray*}
 i\partial\bar\partial\psi&=&\frac{(|g|^2+\delta^2)i\partial\bar\partial|g|^2-i\partial|g|^2\wedge\bar\partial|g|^2}{(|g|^2+\delta^2)^2}\\
 &\geq&\frac{\delta^2i\partial\bar\partial|g|^2}{(|g|^2+\delta^2)^2}\geq\frac{\delta^2}{4|g|^4}i\partial\bar\partial|g|^2,
 \end{eqnarray*}
 so that
 $$
 \Delta\psi\geq\frac{\delta^2}{4|g|^4}\Delta|g|^2=\frac{\delta^2}{|g|^4}|\partial g|^2.
 $$
 Apply \eqref{eq:G_1} with $\psi=\log\frac{|g|^2+\delta^2}{2}$,  we  get (\ref{eq:g}).  
 
Since $\partial |g|^2 =\sum_{k=1}^m \bar{g}_k \partial g_k$,   the Cauchy-Schwarz inequality yields
$$
\left|\partial |g|^2\right|\le |g||\partial g|,
$$
so that $|\partial |g||\le |\partial g|/2$.  This combined with \eqref{eq:g} gives \eqref{eq:ag}.  
 \end{proof}
 
For all $\omega\in\text{Ker}\,S_{\mathcal{H}}\cap \mathcal{D}_1(\Omega_\varepsilon)$, we have
\begin{eqnarray*}
|I_1|^2&\lesssim&\int_{\Omega_\varepsilon}(-\rho_\varepsilon)^{2-r} \langle[\Theta,\Lambda]^{-1} v,  v\rangle e^{-\varphi}\cdot\int_{\Omega_\varepsilon}(-\rho_\varepsilon)^r\langle[\Theta,\Lambda]\omega,\omega\rangle e^{-\varphi}\\
&\lesssim& \frac{q|\log\delta|}{\delta^{2+2q}} \int_{\Omega_\varepsilon}(-\rho_\varepsilon)|f|^2 \cdot\int_{\Omega_\varepsilon}(-\rho)^r\langle[\Theta,\Lambda]\omega,\omega\rangle e^{-\varphi}\ \ \ (\text{by\ }\eqref{eq:v-bound})\\
&\lesssim& \frac{q|\log\delta|}{\delta^{2+2q}} \int_{\partial\Omega_{\varepsilon}}|f|^2d\sigma_\varepsilon\cdot\frac{1}{1-r}\int_{\Omega_\varepsilon}|T^\ast_{\mathcal{H},\varphi}\omega|^2(-\rho)^r e^{-\varphi}\ \ \ (\text{by\ }\eqref{eq:KeyIneqInGeneral})\\
&\lesssim& \frac{q|\log\delta|}{\delta^{2+2q}}  \|f\|_{H^2(\Omega)}^2\cdot\frac{1}{1-r}\int_{\Omega_\varepsilon}|T^\ast_{\mathcal{H},\varphi}\omega|^2(-\rho)^r e^{-\varphi}.  
\end{eqnarray*}
Define
$$
Q(r,\varepsilon):=\frac{1}{1-r}\int_{\Omega_\varepsilon}|T^\ast_{\mathcal{H},\varphi}\omega|^2(-\rho)^r e^{-\varphi}.
$$
Analogously,  we have
\begin{eqnarray*}
|I_3|^2 &\leq& \int_{\Omega_\varepsilon}(-\rho_\varepsilon)^{2-r}\sum_{\nu=1}^n \chi^2 |\partial{d}|^{-4}\left|\frac{\partial(-d)}{\partial\bar z_\nu}\right|^2|v|^2 e^{-\varphi}\cdot\int_{\Omega_\varepsilon}\sum_{k=1}^m\sum_{\mu,\nu=1}^n\left|\frac{\partial\omega_{k\mu}}{\partial\bar z_\nu}\right|^2(-\rho_\varepsilon)^r e^{-\varphi}\\
&\lesssim& \delta^{-2q} \int_{\Omega_\varepsilon}(-\rho_\varepsilon)|v|^2 \cdot\int_{\Omega_\varepsilon}\sum_{k=1}^m\sum_{\mu,\nu=1}^n\left|\frac{\partial\omega_{k\mu}}{\partial\bar z_\nu}\right|^2(-\rho)^r e^{-\varphi}\\
&\lesssim& \delta^{-2q} \int_{\Omega_\varepsilon}(-\rho_\varepsilon)|f|^2|\partial g|^2 {|g|^{-4}}\cdot Q(r,\varepsilon)\ \ \ (\text{by\ }\eqref{eq:KeyIneqInGeneral})\\
&\lesssim&  \frac{|\log\delta|}{\delta^{2+2q}} \|f\|_{H^2(\Omega)}^2\cdot Q(r,\varepsilon)\ \ \ (\text{by\ }\eqref{eq:g})
\end{eqnarray*}
and 
\begin{eqnarray*}
|I_4|^2 &\lesssim& \int_{\Omega_\varepsilon}(-\rho_\varepsilon)^{2-r} |\partial\varphi|^2 \langle[\Theta,\Lambda]^{-1} v,  v\rangle  e^{-\varphi}\cdot\int_{\Omega_\varepsilon}(-\rho_\varepsilon)^r\langle[\Theta,\Lambda]\omega,\omega\rangle e^{-\varphi}\\
&\lesssim& \frac{q|\log\delta|}{\delta^{2}} \int_{\Omega_\varepsilon}(-\rho_\varepsilon)\frac{|\partial\varphi|^2|f|^2}{|g|^{2q}} \cdot\int_{\Omega_\varepsilon}(-\rho_\varepsilon)^r\langle[\Theta,\Lambda]\omega,\omega\rangle e^{-\varphi} \ \ \ (\text{by\ }\eqref{eq:v-bound})\\
&\lesssim&  \frac{q|\log\delta|}{\delta^{2+2q}} \int_{\Omega_\varepsilon}(-\rho_\varepsilon)|f|^2\left|\partial(|z|^2+\lambda)\right|^2\cdot{Q(r,\varepsilon)}\\
& & +\frac{q|\log\delta|}{\delta^{2+2q-2}}\int_{\Omega_\varepsilon}(-\rho_\varepsilon)\frac{|f|^2}{|g|^{2}}\left(q+\frac{1}{|\log\delta|}\right)^2\frac{|\partial|g|\,|^2}{|g|^2} \cdot Q(r,\varepsilon)
\ \ \ (\text{by\ }\eqref{eq:KeyIneqInGeneral})\\
&\lesssim& \frac{q^3|\log\delta|^2}{\delta^{2+2q}} \|f\|_{H^2(\Omega)}^2\cdot Q(r,\varepsilon)\ \ \ (\text{by\ }\eqref{eq:G_2},\ \eqref{eq:ag}).
\end{eqnarray*}
It remains to estimate $I_2$.  Since
\begin{eqnarray*}
i\partial\bar{\partial}\log|g|^2&=&\frac{i\partial\bar\partial |g|^2}{|g|^2}+\frac{i\partial |g|^2 \wedge \bar\partial |g|^2}{|g|^4}\\
&=&\frac{i}{|g|^4}\left(-\sum_{j,l}g_j\bar g_l\partial g_l\wedge \bar\partial \bar g_j+|g|^2\sum_j\partial g_j\wedge \bar\partial \bar g_j\right)\\
&=&\frac{i}{|g|^4}\sum_{j<l}(g_j\partial g_l-g_l\partial g_j)\wedge(\overline{g_j\partial g_l-g_l\partial g_j}),
\end{eqnarray*}
it follows that
\begin{eqnarray}\label{eq:general_2}
\sum_{\mu,\nu=1}^n \frac{\partial^2 \log |g|^2}{\partial z_\mu \partial \bar{z}_\nu}\xi_\mu \bar{\xi}_\nu
&=& \frac{1}{|g|^4} \sum_{j<l} \left|\sum_{\mu=1}^n \left(g_j\frac{\partial g_l}{\partial z_\mu}-
g_l\frac{\partial g_j}{\partial z_\mu}\right) \xi_\mu \right|^2\nonumber\\
&=& \frac{1}{2|g|^4} \sum_{j,l} \left|\sum_{\mu=1}^n \left(g_j\frac{\partial g_l}{\partial z_\mu}-
g_l\frac{\partial g_j}{\partial z_\mu}\right) \xi_\mu \right|^2
\end{eqnarray}
holds for any vector $\xi=(\xi_1,\cdots,\xi_n)\in \mathbb C^n$.   In particular, we have
\begin{eqnarray}\label{eq:general_3}
\langle[\Theta,\Lambda]\omega,\omega\rangle
&\geq&\frac{1}{|\log\delta|}\sum_{k}\sum_{\mu,\nu}\frac{\partial^2\log |g|^2}{\partial z_\mu \partial\bar z_\nu}\omega_{k\mu}\bar{\omega}_{k\nu}\nonumber\\
&=& \frac{1}{2|g|^4|\log\delta|}\sum_{k,j,l} \left|\sum_{\mu=1}^n \left(g_j\frac{\partial g_l}{\partial z_\mu}-
g_l\frac{\partial g_j}{\partial z_\mu}\right) \omega_{k\mu} \right|^2\nonumber\\
&\geq& \frac{1}{2|g|^4|\log\delta|}\sum_{k,j} \left|\sum_{\mu=1}^n \left(g_j\frac{\partial g_k}{\partial z_\mu}-
g_k\frac{\partial g_j}{\partial z_\mu}\right) \omega_{k\mu} \right|^2
\end{eqnarray}
On the other hand,  we have
\begin{eqnarray*}
\frac{\partial v_{k\mu}}{\partial z_\nu}&=&  
\frac{\partial}{\partial z_\nu} \left[ \frac{f}{|g|^4}\sum_{j=1}^m  g_j\left(\bar g_j\frac{\partial \bar g_k}{\partial\bar z_\mu}-\bar g_k\frac{\partial \bar g_j}{\partial \bar z_\mu}\right)\right]\\
&=&\frac{\partial (f/|g|^4)}{\partial z_\nu}\cdot \sum_{j=1}^m  g_j\left(\bar g_j\frac{\partial \bar g_k}{\partial\bar z_\mu}-\bar g_k\frac{\partial \bar g_j}{\partial \bar z_\mu}\right)\\
&& +\frac{f}{|g|^4}\sum_{j=1}^m\frac{\partial g_j}{\partial z_\nu}\left(\bar g_j\frac{\partial \bar g_k}{\partial\bar z_\mu}-\bar g_k\frac{\partial \bar g_j}{\partial \bar z_\mu}\right).
\end{eqnarray*}
Thus
\begin{eqnarray*}
I_2&=&\sum_{k,j} \int_{\Omega_\varepsilon} (-\rho_\varepsilon)\sum_{\mu,\nu} \chi |\partial d|^{-2}\frac{\partial(-d)}{\partial\bar z_\nu}\frac{\partial(f/|g|^4)}{\partial z_\nu}g_j\left(\bar g_j\frac{\partial \bar g_k}{\partial\bar z_\mu}-\bar g_k\frac{\partial \bar g_j}{\partial \bar z_\mu}\right)\bar\omega_{k\mu}e^{-\varphi}\\
&&+\sum_{k,j} \int_{\Omega_\varepsilon} (-\rho_\varepsilon)\sum_{\mu,\nu}  \chi |\partial d|^{-2}\frac{\partial(-d)}{\partial\bar z_\nu}\frac{f}{|g|^4}\frac{\partial g_j}{\partial z_\nu}\left(\bar g_j\frac{\partial \bar g_k}{\partial\bar z_\mu}-\bar g_k\frac{\partial \bar g_j}{\partial \bar z_\mu}\right)\bar\omega_{k\mu}e^{-\varphi}\\
&=:&J_1+J_2.
\end{eqnarray*}
By Cauchy-Schwarz inequality, we have
\begin{eqnarray*}
|J_1|^2&\lesssim& |\log\delta|\int_{\Omega_\varepsilon} (-\rho_\varepsilon) |g|^6 |\partial(f/|g|^4)|^2 e^{-\varphi}\\
&&\cdot  \int_{\Omega_\varepsilon} (-\rho)^r\frac{1}{|g|^4|\log\delta|}\sum_{j,k} \left|\sum_{\mu=1}^n \left(g_j\frac{\partial g_k}{\partial z_\mu}-
g_k\frac{\partial g_j}{\partial z_\mu}\right) \omega_{k\mu} \right|^2e^{-\varphi}\\
&\lesssim& |\log\delta|\int_{\Omega_\varepsilon} (-\rho_\varepsilon)\left[\frac{|\partial f|^2}{|g|^{2+2q}}+\frac{|f|^2 |\partial |g|\,|^2}{|g|^{4+2q}}\right]\cdot Q(r,\varepsilon)\ \ \ (\text{by\ }\eqref{eq:general_3},\eqref{eq:KeyIneqInGeneral})\\
&\lesssim& \frac{|\log\delta|^2}{\delta^{2+2q}}\|f\|_{H^2(\Omega)}^2\cdot Q(r,\varepsilon) \ \ \ (\text{by\ }\eqref{eq:G_3},\eqref{eq:ag}).
\end{eqnarray*}
Analogously,  we have
\begin{eqnarray*}
|J_2|^2&\lesssim& |\log\delta|\int_{\Omega_\varepsilon} (-\rho_\varepsilon)\frac{|f|^2}{|g|^4}|\partial g|^2 e^{-\varphi}\\
&&\cdot  \int_{\Omega_\varepsilon} (-\rho)^r\frac{1}{|g|^4|\log\delta|}\sum_{j,k} \left|\sum_{\mu=1}^n \left(g_j\frac{\partial g_k}{\partial z_\mu}-
g_k\frac{\partial g_j}{\partial z_\mu}\right) \omega_{k\mu} \right|^2e^{-\varphi}\\
&\lesssim& |\log\delta|\int_{\Omega_\varepsilon} (-\rho_\varepsilon)\frac{|f|^2}{|g|^4}|\partial g|^2 e^{-\varphi} \cdot Q(r,\varepsilon)\\
&\lesssim&\frac{|\log\delta|}{\delta^{2q}}\int_{\Omega_\varepsilon} (-\rho_\varepsilon) \frac{|f|^2}{|g|^4}|\partial g|^2 \cdot Q(r,\varepsilon)\\
&\lesssim&\frac{|\log\delta|^2}{\delta^{2+2q}}\|f\|_{H^2(\Omega)}^2\cdot Q(r,\varepsilon)
\end{eqnarray*}
in view of \eqref{eq:g}.  
These together with \eqref{eq:2generator_1} and \eqref{eq:2generator_2} yield
$$
\left|\int_{\Omega_\varepsilon} \langle v,\omega\rangle e^{-\varphi}\right|^2 \lesssim  \frac{|\log\delta|^{2}}{\delta^{2+2q}}\|f\|^2_{H^2(\Omega)} \cdot \frac1{1-r}\int_{\Omega_\varepsilon} |T^\ast_{\mathcal{H},\varphi}\omega|^2 (-\rho)^{r} e^{-\varphi},
$$    
for any $\omega\in {\rm Ker\,}S_\mathcal H\cap \mathcal{D}_1(\Omega_\varepsilon)$. 
In view of Lemma \ref{lm:density},  the same inequality remains valid for all $\omega\in {\rm Ker\,}S_\mathcal H\cap \mathrm{Dom} (T^\ast_{\mathcal{H},\varphi})$. 
Repeat the duality argument in the case $m=2$ with  $T^\ast_\varphi$ replaced by $T^\ast_{\mathcal{H},\varphi}$,  we get a solution $u_\varepsilon$ of \eqref{eq:dbar} which satisfies the following estimate 
$$
          (1-r) \int_{\Omega_\varepsilon} |u_\varepsilon|^2 (-\rho)^{-r} e^{-\varphi} \leq C(\delta)^2  \|f\|^2_{H^2(\Omega)},\ \ \ \forall\,0<r<1,
  $$
  from which \eqref{eq:general_1} immediately follows since $\varphi\le \sup_{z\in \Omega}\{|z|^2+\lambda(z)\}$.
The proof of Theorem \ref{th:Main}
 is complete.
\section{A heuristic remark on the three-weights technique}
In this section we explain briefly why the three-weights technique probably fails for solving the $H^2$ corona problem (in the case $m=2$).   Let $\varphi_1=\varphi-2\psi$,  $\varphi_2=\varphi-\psi$ and $\varphi_3=\varphi$,  where $\varphi$ is a smooth psh function $\Omega_\varepsilon$ and $\psi$ is a smooth real-valued function on $\Omega_\varepsilon$.  The $\bar{\partial}-$operator induces two closed,  densely defined operators
$$
T: L^2(\Omega_\varepsilon,\varphi_1)\rightarrow L^2_{(0,1)}(\Omega_\varepsilon,\varphi_2),\ \ \ S: L^2_{(0,1)}(\Omega_\varepsilon,\varphi_2)\rightarrow L^2_{(0,2)}(\Omega_\varepsilon,\varphi_3).
$$
Let $\mathscr{D}_{(0,1)}(\Omega_\varepsilon)$ be the set of smooth $(0,1)-$forms with compact supports in $\Omega_\varepsilon$ and $T^\ast$ the adjoint of $T$.   It is known from \cite{HormanderBook} that one can choose $\psi$ such that $\mathscr{D}_{(0,1)}(\Omega_\varepsilon)$ lies dense in $\mathrm{Dom}(T^\ast)\cap \mathrm{Dom}(S)$ under the graph norm
$$
\omega \mapsto \|\omega\|_{\varphi_2}+\|T^\ast \omega\|_{\varphi_1} + \|S\omega \|_{\varphi_3}
$$
and 
\begin{eqnarray}\label{eq:three-weights_1}
(1+\delta)\|T^\ast \omega\|_{\varphi_1}^2 + \|S\omega\|_{\varphi_3}^2
& \ge & \int_{\Omega_\varepsilon} \left(\sum_{j,k=1}^n \frac{\partial^2\varphi}{\partial z_j\partial \bar{z}_k}\omega_j\bar{\omega}_k-(1+1/\delta)|\partial \psi|^2 |\omega|^2\right) e^{-\varphi}\nonumber\\
& & + \int_{\Omega_\varepsilon} \sum_{j,k}\left|\frac{\partial \omega_j}{\partial \bar{z}_k}\right|^2 e^{-\varphi}
\end{eqnarray}
holds for any $\omega\in \mathscr{D}_{(0,1)}(\Omega_\varepsilon)$ and $\delta>0$.
Replace $\varphi$ by $\varphi':=\varphi + \kappa\circ s$ in \eqref{eq:three-weights_1},   where $s$ is a strictly psh exhaustion function on $\Omega_\varepsilon$ and $\kappa$ is certain convex  (rapidly) increasing function (depending on $\delta$),
we obtain
\begin{eqnarray}\label{eq:three-weights_2}
(1+\delta)\|T^\ast \omega\|_{\varphi_1}^2 + \|S\omega\|_{\varphi_3}^2 
&\ge& \int_{\Omega_\varepsilon} \sum_{j,k=1}^n \left(\frac{\partial^2\varphi}{\partial z_j\partial \bar{z}_k}
+\frac12  \frac{\partial^2\kappa\circ s}{\partial z_j\partial \bar{z}_k}\right)
\omega_j\bar{\omega}_k e^{-\varphi'}\nonumber\\
 && + \int_{\Omega_\varepsilon} \sum_{j,k}\left|\frac{\partial \omega_j}{\partial \bar{z}_k}\right|^2 e^{-\varphi'}.
\end{eqnarray}
We remark that $\delta$ will tends to zero in the end. 

A similar argument as \S\,3 yields
\begin{eqnarray}\label{eq:three-weights_3}
&& \int_{\Omega_\varepsilon} |S \omega |^2 (-\rho)^{r} e^{-\varphi_3} + \frac{1+\delta}{1-(1+\delta)r}
 \int_{\Omega_\varepsilon} |T^\ast \omega|^2 (-\rho)^{r} e^{-\varphi_1}\\
 & \ge & \int_{\Omega_\varepsilon} \sum_{j,k} \left[\frac{\partial^2\varphi}{\partial z_j\partial\bar{z}_k} +\frac12  \frac{\partial^2\kappa\circ s}{\partial z_j\partial \bar{z}_k} +r (-\rho)^{-1}
 \frac{\partial^2\rho}{\partial z_j\partial\bar{z}_k}\right] \omega_j\bar{\omega}_k (-\rho)^{r} e^{-\varphi'} \nonumber\\
 && + \int_{\Omega_\varepsilon} \sum_{j,k}\left|\frac{\partial \omega_j}{\partial \bar{z}_k}\right|^2 (-\rho)^{r} e^{-\varphi'},\ \ \ \forall\,\omega\in \mathscr{D}_{(0,1)}(\Omega_\varepsilon).\nonumber
  \end{eqnarray}
  Substitute $\varphi(z)=\lambda(z)+|z|^2+\log |g(z)|^2$ into \eqref{eq:three-weights_3},  we obtain
\begin{eqnarray}\label{eq:three-weights_4}
&& \int_{\Omega_\varepsilon} |S \omega|^2 (-\rho)^{r} e^{-\varphi_3} + \frac{1+\delta}{1-(1+\delta)r}
 \int_{\Omega_\varepsilon} |T^\ast \omega|^2 (-\rho)^{r} e^{-\varphi_1}\\
 & \ge & \int_{\Omega_\varepsilon} \sum_{j,k} \Theta_{j\bar{k}} \omega_j\bar{\omega}_k (-\rho)^{r} e^{-\varphi'} 
   + \int_{\Omega_\varepsilon} \sum_{j,k}\left|\frac{\partial \omega_j}{\partial \bar{z}_k}\right|^2 (-\rho)^{r} e^{-\varphi'}\nonumber,
  \end{eqnarray}   
  where $\Theta_{j\bar{k}}=\delta_{jk}+\partial^2 \log |g|^2/\partial z_j\partial \bar{z}_k + \frac12 \partial^2 \kappa\circ s/\partial z_j\partial \bar{z}_k$.
  
 Again,  we have the following decomposition 
$$
\int_{\Omega_\varepsilon} v\cdot \bar{\omega} e^{-\varphi_2}=I_1+I_2+I_3+I_4,
$$
where 
$$
I_4=-\int_{\Omega_\varepsilon}(-\rho_\varepsilon) \sum_j \chi|\partial d|^{-2} \frac{\partial (-d)}{\partial \bar{z}_j} \frac{\partial \varphi_2}{\partial z_j} v\cdot \bar{\omega} e^{-\varphi_2}.
$$
Note that
$$
|I_4|^2 \lesssim  \int_{\Omega_\varepsilon}(-\rho_\varepsilon)^{2-r}|\partial\varphi_2|^2 |v|_\Theta^2  e^{-\varphi_1}\cdot\int_{\Omega_\varepsilon}(-\rho_\varepsilon)^r\langle[\Theta,\Lambda]\omega,\omega\rangle e^{-\varphi'}.
$$
Clearly,  $\varphi_1>0$ provided that $\kappa$ is so rapidly increasing,  i.e.,  $e^{-\varphi_1}<1$.  
On the other hand,  for $\partial \varphi_2=\partial \varphi + \kappa'(s)\partial s -\partial \psi$,  the term
$|\partial \varphi-\partial \psi|^2 |v|_\Theta^2$ may be controlled  provided that $\kappa$ is so rapidly increasing,  while the term $\kappa'(s)^2 |\partial s|^2 |v|_\Theta^2$ is controllable unless at least
$$
i\partial\bar{\partial} \kappa \circ s = i \kappa''(s)\partial\bar{\partial} s + i\kappa'(s) \partial s\wedge \bar{\partial} s \gtrsim i\kappa'(s)^2 \partial s\wedge \bar{\partial} s,
$$
but this seems impossible!

{\bf Acknowledgements.} The authors would like to thank the referee for valuables comments and   Yuanpu Xiong for suggesting a proof of Lemma \ref{lm:density}.
 

\begin{thebibliography}{99}  
 \bibitem{Alling} N.  L.  Alling,  {\it A proof of the corona conjecture for finite open Riemann surfaces},  Bull.  Amer.  Math.  Soc.  {\bf 70} (1964), 110--112.
 \bibitem{Amar} E.  Amar, {\it On the corona problem},  J. Geom. Anal.  {\bf 1} (1991),  291--305.  
\bibitem{AnderssonBook} M.  Andersson, Topics in Complex Analysis,  Universitext,  Springer-Verlag,  New York, 1997.
   \bibitem{Andersson} M. Andersson, {\it The $H^2$ corona problem and $\bar{\partial}_b$ in weakly pseudoconvex domains}, Trans. Amer. Math. Soc. {\bf 342} (1994), 241--255.
   \bibitem{Andersson2} M.  Andersson,  { \it  On the $H^p$ corona problem}, Bull. Sci. Math. {\bf 118}  (1994),  287--306.
   \bibitem{AC} M. Andersson and H. Carlsson,  {\it Estimates of solutions of the $H^p$ and BMO corona problem},  Math.  Ann.  {\bf 316} (2000),  83--102.
   \bibitem{Berndtsson} B. Berndtsson, {\it $\bar{\partial}_b$ and Carleson type inequalities}, In: Complex Analysis. II. Proceedings of the special year held at the University of Maryland, College Park, Md., July 1985--December 1986. (C. A. Berenstein eds.) Lecture Notes in Math., {\bf 1276}, Springer-Verlag, Berlin, (1987), 42--54.
   \bibitem{Carleson} L. Carleson, {\it Interpolations by bounded analytic functions and the corona problem}, Ann. of Math. {\bf 76} (1962), 547--559.
   \bibitem{Catlin84} D.  Catlin,  {\it Global regularity of the $\bar{\partial}-$Neumann problem}, Proc.  Sympos.  Pure Math.,  {\bf 41},  Amer. Math. Soc,  Providence,  R.I., 1984,   39--49.
 \bibitem{Catlin87}  D.  Catlin,  {\it Subelliptic estimates for the $\bar{\partial}-$Neumann problem on pseudoconvex domains},  Ann.  of Math.  {\bf 126} (1987), 131--191.
   \bibitem{ChenFu} B. Y. Chen and S. Fu, {\it Comparison of the Bergman and Szeg\"o kernels}, Adv. Math. {\bf 228} (2011), 2366--2384.
    \bibitem{Chen14} B. Y. Chen, {\it Weighted Bergman spaces and the $\bar{\partial}-$equation}, Trans. Amer. Math. Soc. {\bf 366} (2014), 4127--4150.
  \bibitem{DAngelo}  J.  P.  D'Angelo,  {\it Real hypersurfaces. orders of contact, and applications},   Ann.  of Math.  {\bf 115}
 (1982),  615--637.
    \bibitem{DemaillyBook} J.-P.  Demailly,  Complex Analytic and Differential Geometry,  lecture notes, 2012,  available at \url{https://www-fourier.ujf-grenoble.fr/~demailly}.
    \bibitem{DF1} K.  Diederich and J.  E.  Forn{\ae}ss,  {\it Pseudoconvex domains: Bounded strictly plurisubharmonic exhaustion functions}, Invent. Math. {\bf 39} (1977), no. 2, 129--141.
%    \bibitem{DF2} K.  Diederich and J.  E.  Forn\ae ss,  {\it Pseudoconvex domains: existence of Stein neighborhoods},Duke Math.  J.  {\bf 44} (1977),  641--662.
 \bibitem{Douglas} R.  G.  Douglas et al.,  {\it A history of the Corona problem},  In: The Corona Problem:  Connections Between Operator Theory,  Function Theory,  and Geometry (R.  G.  Douglas et al.  eds.),  Fields Institute Communications {\bf 72},  1--29,  Springer 2014.
 \bibitem{Fornaess-Sibony} J.  E.  Forn\ae ss and N. Sibony, {\it Smooth pseudoconvex domains in $\mathbb{C}^2$ for which the Corona theorem and $L^p$ Estimates for $\dbar$ fail}, In: Complex analysis and Geometry (V. Ancona and A. Silva eds.), Plenum Press, New York, (1993), 209--222. 
\bibitem{Garnnett} J. Garnett, {Bounded Analytic Functions}, Academic Press, 1981.
   \bibitem{Hormander} L. H\"ormander, {\it $L^2$ estimates and existence theorems for the $\bar{\partial}-$operator}, Acta Math. {\bf 113} (1965), 89--152.
   \bibitem{Hormander67} L. H\"ormander,  {\it Generators for some rings of analytic functions},  Bull.  Amer.  Math.  Soc.  {\bf 73} (1967),  943--949.  
 \bibitem{HormanderBook} L. H\"ormander, An Introduction to Complex Analysis in Several Variables, Third Edition (Revised), Elsevier, 1990.
 \bibitem{KZ} T. V. Khanh and G. Zampieri, {\it Regularity of the $\bar\partial-$Neumann problem at point of infinite type}, J. Funct. Anal. {\bf 259} (2010), 2760--2775.
 \bibitem{Kohn} J. J. Kohn, {\it Superlogarithmic estimates on pseudoconvex domains and CR manifolds}, Ann. of Math. {\bf 156} (2002), 213--248.
\bibitem{Ohsawa} T. Ohsawa, Analysis of several complex variables, Translations of Mathematical Monographs {\bf 211}, Ametican Mathematical Society, Providence, RI, 2002.
   \bibitem{OT} T. Ohsawa and K. Takegoshi, {\it On the extension of $L^2$ holomorphic functions}, Math. Z. {\bf 195} (1987), 197--204.
   \bibitem{Sibony72} N.  Sibony,  {\it Prolongement analytique des fonctions holomorphes born\'ees},  C.  R.  Acad.  Sci.  Paris Ser.  A--B {\bf 275} (1972),  973--976.  
  \bibitem{Sibony87}  N.  Sibony,  {\it Probl\'eme de la couronne pour des domaines pseudoconvexes bord lisse},  Ann.  of Math.  {\bf 126} (1987),  675--682.
%  \bibitem{SibonyB} N.  Sibony,  {\it Une classe de domaines pseudoconvexes},  Duke Math.  J.  {\bf 55} (1987),  299--319.
   \bibitem{Skoda} H. Skoda, {\it Application des techniques $L^2$ \`{a} la th\'{e}orie des id\'{e}aux d'une alg\`{e}bre de fonctions holomorphes avec poids}, Ann. Sci. \'{E}cole Norm. Sup. {\bf 5} (1972), 545--579.
   \bibitem{Stein} E. M. Stein,  Boundary Behavior of Holomorphic Functions of Several Complex Variables,  Princeton University Press,  Princeton,  NJ; University of Tokyo Press, Tokyo,  1972.   
   \bibitem{Stout} E.  L.  Stout,  {\it Bounded holomorphic functions on finite Riemann surfaces}, Trans.  Amer.  Math.  Soc.  {\bf 120} (1965),  255--285.
%   \bibitem{Straube} E. J. Straube, {\it Plurisubharmonic functions and subelliptic estimates for the $\bar\partial$-Neumann problem on nonsmooth domains}, Math. Res. Letter {\bf 4} (1997), 459--467.
   \bibitem{Tolokonnikov} V. A. Tolokonnikov, {\it Estimates in the Carleson corona theorem, ideals of the algebra $H^\infty$, a problem of Sz.-Nagy}, J. Soviet Math. {\bf 22} (1983), 1814--1828.
   \bibitem{Treil} Treil, S. {\it Estimates in the corona theorem and ideals of $H^\infty$: A problem of T. Wolff}, J. Anal. Math. {\bf 87} (2002), 481--495.  
   \bibitem{Uchiyama}  A. Uchiyama, {\it Corona theorems for countably many functions and estimates for their solutions}, preprint (1980).
 \bibitem{Varopoulos} N. Varopoulos,  {\it BMO functions and the $\bar{\partial}-$equation},  Pacific J.  Math. {\bf 71} (1977),  221--273.
    \end{thebibliography}
          \end{document}